\begin{document}
\title{Compactifying $\Spec \ZZ$}
\author{Satoshi Takagi}
\date{}
\maketitle

\begin{abstract}
In this paper, we introduce a new algebraic type
of `convexoid rings', and we give the definition
of (weak) convexoid schemes, which share
similar properties with ordinary schemes.
As a result, we give a purely-algebraic construction of
the compactification $\overline{\Spec \ZZ}=\Spec \ZZ \cup \{\infty\}$,
which is realized as the Zariski-Riemann space of $\Spec \ZZ$
in the category of weak convexoid schemes.
\end{abstract}

\tableofcontents

\setcounter{section}{-1}
\section{Introduction}
In this paper, we give a purely-algebraic 
construction of the compactification 
$\overline{\Spec \ZZ}=\Spec \ZZ \cup \{\infty\}$.

The philosophy of Arakelov tells 
that the correct compactification of $\Spec \ZZ$
should be the space which consists of finite places
together with the infinite place $\infty$.
However, the conventional theories could
not obtain this space canonically, since $\Spec \ZZ$
is the final object in the category of schemes.
Therefore, Arakelov geometers and
number theorists had to give ad hoc definition
for the desired spaces:
in Arakelov geometry, we endow an hermitian
metric on vector bundles, as a substitute
for the information on the infinite place;
in number theory, places (finite or infinite)
are defined by valuations, and is not defined
algebraically.

Here it might be valuable to ask
\textit{why} these concepts and definitions behave so nicely,
not \textit{how}.
Also, we might ask why the infinite place
cannot be realized within the category of schemes.

The key point is simple.
Here, we give a new type of algebra
which we call \textit{convexoid rings}:
these have two binary operators $\boxplus$ and $\times$,
and are (commutative) monoids with respect to $\times$:
however, we do not assume the associativity of $\boxplus$.
The category of convexoid rings contains
that of rings as a full subcategory
(and also, multiplicative monoids with absorbing elements),
and we can consider `convexoid schemes'
as a generalization of schemes.
We can go further, and define `weak convexoid schemes'
so that we can treat Zariski-Riemann spaces properly.
As a corollary, we obtain the main theorem:
\begin{Thm}
The compactification $\overline{\Spec \ZZ}=\Spec \ZZ \cup \{\infty\}$
of $\Spec \ZZ$ can be realized as a weak convexoid scheme.
It is defined by the universal property, namely
the Zariski-Riemann space of $\Spec \ZZ$ over $\Proj R_{0}$,
where $R_{0}$ is the initial object in the category
of convexoid rings.
The stalk $\scr{O}_{\infty}$ of $\overline{\Spec \ZZ}$
at the infinity place is the valuation convexoid ring $\mathbf{D}\QQ$,
which is the unit disk in $\QQ$ consisting of rationals
the absolute value of which is not more than $1$.
\end{Thm}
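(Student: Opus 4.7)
The plan is to follow the classical template for constructing Zariski--Riemann spaces, but now inside the category of weak convexoid schemes developed in the earlier sections. First, I would invoke the general formalism to identify the Zariski--Riemann space $\ZR(\Spec \ZZ / \Proj R_{0})$ as a weak convexoid scheme equipped with a natural morphism to $\Spec \ZZ$, whose underlying set parametrizes valuation convexoid subrings of $\QQ$ that dominate the local convexoid rings of $\Spec \ZZ$ and that sit inside the fraction convexoid ring determined by $\Proj R_{0}$.

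The second step is to classify these valuation convexoid rings explicitly. On the one hand, every non-archimedean valuation subring of $\QQ$ is of the form $\ZZ_{(p)}$ for a prime $p$ (together with $\QQ$ itself as the generic point), and these recover the usual points of $\Spec \ZZ$. On the other hand, the unit disk $\mathbf{D}\QQ=\{x\in \QQ : |x|\leq 1\}$, equipped with the $\boxplus$ forced by the archimedean absolute value in the convexoid setting, should be verified to be a valuation convexoid ring which dominates $\ZZ$ but is not contained in any localization $\ZZ_{(p)}$. This contributes exactly one extra point $\infty$, so the underlying set of $\ZR(\Spec \ZZ / \Proj R_{0})$ becomes $\Spec \ZZ \cup \{\infty\}$.

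Third, I would match the topology and structure sheaf with those expected of the Arakelov-style compactification. A basic open containing $\infty$ is forced by the domination relation to be the complement of a finite set of finite places, which gives the right topology. The structure sheaf is defined, as in the classical case, as intersections of valuation convexoid rings over an open; since $\infty$ is the unique point whose dominating valuation convexoid ring is $\mathbf{D}\QQ$, the stalk $\scr{O}_{\infty}$ is precisely $\mathbf{D}\QQ$, and the universal property of $\ZR$ identifies it with the claimed compactification.

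The main obstacle will be the second step, namely the classification of valuation convexoid subrings of $\QQ$ sitting inside the ambient convexoid fraction ring. Because $\boxplus$ is not assumed associative, the standard bijection between valuations of a field and valuation subrings has to be reworked within the convexoid framework, and one has to exclude the possibility of exotic archimedean-like valuation convexoid subrings of $\QQ$ distinct from $\mathbf{D}\QQ$. Once existence and uniqueness of $\mathbf{D}\QQ$ as the archimedean dominator is established, the rest of the proof reduces to routine bookkeeping within the Zariski--Riemann formalism already set up in the preceding sections.
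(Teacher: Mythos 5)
Your overall skeleton matches the paper's: the paper also builds the space whose points are valuation convexoid rings of $\QQ$ (resp.\ $K$), topologizes it by conditions of the form $\{R \mid \mathcal{S}\subset R\}$, takes the structure sheaf to be intersections of the valuation convexoid rings over an open, and identifies the unique archimedean point with $\mathbf{D}\QQ$. But two mechanisms that make this work are missing from your outline, and they are not routine bookkeeping.

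First, the classification step you correctly flag as the main obstacle is \emph{not} a classification of all valuation convexoid subrings of $\QQ$ dominating $\Spec\ZZ$; posed that way it fails (e.g.\ the trivial $\boxplus\equiv 0$ monoid structures produce the ``infinitely many infinite places'' pathology the paper warns about in \S 1). What saves the day is the base $\Proj R_{0}$: it is covered by two charts $U_{1}=\Spec R_{0}[\gamma^{-1}]$ and $U_{2}=\Spec R_{0}[(2\gamma)^{-1}]$, and the Zariski--Riemann space is assembled chart by chart. Over $U_{1}$ everything is equivalent to a ring and classical valuation theory gives $\Spec\mathcal{O}_{K}$; over $U_{2}$ the structure morphism forces $1\boxplus 1$ to be invertible in every valuation convexoid ring that occurs, and it is exactly this hypothesis --- the algebraic form of the triangle inequality --- under which the convexoid Ostrowski theorem (Theorems \ref{thm:ostrowski} and \ref{thm:ostrowski:general}) pins the possibilities down to $\mathcal{O}_{K,\mathfrak{p}}$ with residue characteristic $\neq 2$ and the disks $\mathbf{D}_{\sigma}K$. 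The place above $2$ is then recovered from the $U_{1}$ chart by a pushout along $\Spec\mathcal{O}_{K}[1/2]$. Your uniform treatment of all finite places, with no mention of where the condition $1\boxplus 1\in R^{\times}$ comes from, skips the one point at which the archimedean norm is forced algebraically.

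Second, you invoke ``the general formalism'' for Zariski--Riemann spaces to get existence and the universal property for free. The paper explicitly notes that no such general theory is available for (weak) convexoid schemes, so after the explicit construction one still has to prove by hand that the morphism $\nu:X\to S_{0}$ is proper and that every $S_{0}$-morphism from $\Spec\mathcal{O}_{K}[1/2]$ to a proper $Y$ factors uniquely through $X_{2}$ (via the valuative lifting $\Spec R\to Y$ for each $R\in|X_{2}|$, continuity of the resulting map, and the factorization of $\scr{O}_{Y}(U)\to K$ through $\mathbf{D}_{\sigma}K$ whenever $\infty_{\sigma}$ lies in the preimage). These verifications constitute roughly half of the paper's proof and cannot be outsourced to a nonexistent general theorem.
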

This theorem can also be extended to the
ring of integers $\mathcal{O}_{K}$ of any algebraic field $K$.

We remark that the set $\Gamma(\overline{\Spec \ZZ},\scr{O})$
of global sections is $\{0,\pm 1\}$, which some
$\FF_{1}$-geometers denote by $\FF_{1^{2}}$.
This does not have the $\boxplus$-structure,
but only the multiplicative monoid structure as expected.
We dare not say that we have obtained the
correct definition of $\FF_{1}$ (or $\FF_{1^{2}}$);
many people are hoping for too many dreams on $\FF_{1}$,
and we just gave a partial answer for this.

We also remark that this compactification of $\Spec \ZZ$
is almost identical to that of Haran's \cite{Haran}, 
or even that of Durov's \cite{Durov};
of course they are not mentioning the convexoid structure,
but at least the stalks of the structure sheaves on the infinite places
coincide as a multiplicative monoid.
However, we emphasize the fact
that the construction given in this paper
canonically induces the archimedean norm
structure from the algebraic structure,
and therefore $\overline{\Spec \ZZ}$
is determined by the universal property;
while the other two bring the archimedean
norm structure outside the algebraic world
and therefore their definitions are \textit{ad hoc}.

This paper is organized as follows:
In \S 1, we illustrate how we come up with convexoids,
since the reader may wonder why he or she
has to be involved with it instead of sticking to
the classical world of rings.

In \S 2, we give the definition of multi-convexoids
and multi-convexoid rings,
and see that their behaviour is quite similar to those of rings.

In \S 3, we prove a variant of the classical
Ostrowski's theorem.
The crucial difference is that it is formulated 
in completely algebraic terms, and
this theorem lies at the heart of the main result of this paper.

In \S 4, we give the
definition of convexoid schemes.
It is already assured that we can define them
analogously as the theory of ordinary schemes
\cite{Takagi1}. However to reach the goal,
we must weaken the condition of what a
`patching' should be, by admitting certain kinds
of twists, or in other words,
weak homomorphisms.
A twist does not affect the semiring of ideals,
hence we can safely run the construction
of the underlying space of the spectrum.
We also give the construction of the
`fake closure' of $\Spec \ZZ$:
this is a convexoid scheme with the underlying space
homeomorphic to $\overline{\Spec \ZZ}$,
and is close to our answer.
Still, it is mal-behaved on the infinite place,
hence we will seek for further improvement
in the latter sections.

In \S 5, we give the definition of graded convexoid rings,
and the convexoid schemes $\Proj A$ for a graded
convexoid ring $A$.
These constructions are completely analogous 
to those of rings, and we claim that the
above `fake closure' can be expressed as
$\Proj R_{0}$, which turns out to be a very natural object.

In \S 6, we define the notion
of weak convexoid schemes, which
is a variant of weak $\scr{C}$-schemes 
introduced in \cite{Takagi1}.
This enables us to treat Zariski-Riemann spaces,
and as a result, we finally reach the correct
definition of $\overline{\Spec \ZZ}$.

The last section \S 7 is an appendix,
which shows that the analogy of
linear systems and projective morphisms
in algebraic geometry is also valid
for $\Spec R_{0}$, and
that we have an immersion $\Proj R_{0} \to \PP$,
where $\PP$ is a proprojective space over $\FF_{1^{2}}$.
The concepts and
statements introduced in this section are by no means precise:
these will be affirmed in the forthcoming papers.

\textbf{Notation and conventions:}
Any ring is unital.
We denote by $\cat{CMnd$_{0}$}$ (resp. $\cat{CRing}$)
the category of commutative monoids with absorbing elements
(resp. commutative rings)
and their homomorphisms.
For any subring $R$ of $\CC$,
we denote by $\mathbf{D}R$ the unit disk
$\{x \in R \mid |x| \leq 1\}$.
This has a structure of a convexoid ring
(see Definition \ref{def:convexoid} and Theorem
\ref{thm:ostrowski}).
When given a commutative (convexoid) ring $R$,
we denote by $\Omega(R)$
the distributive lattice of finitely generated ideals
of $R$ modulo the congruence $\mathfrak{a}^{2}=\mathfrak{a}$.
Two ideals $\mathfrak{a}$ and $\mathfrak{b}$
is equal in $\Omega(R)$ if and only if
$\sqrt{\mathfrak{a}}=\sqrt{\mathfrak{b}}$.

We frequently use the terminologies
of category theory, based on the textbook
\cite{CWM}.
The theory of convexoid schemes 
shares most of the part with the one already
exposited in \cite{Takagi1} and \cite{Takagi2};
we will not repeat the argument here,
and many basic facts will be referred to the above mentioned
articles.

\section{Preliminary Observations}

The development of the theory
of schemes over $\FF_{1}$ arose recently,
motivated by the Riemann hypothesis.

Weil's conjecture,
which is an analogy of the Riemann hypothesis
for the positive characteristic case,
has been proved by Deligne \cite{Deligne}, 
by considering the multiple zeta function
on the $n$-fold product 
\[
X \times_{\FF_{q}} X \times_{\FF_{q}} \cdots \times_{\FF_{q}}
X
\]
of the given projective variety $X$ over $\FF_{q}$.

Many people are hoping to imitate this method
to prove the original Riemann hypothesis up to now.
The essential part is to find a correct `base field'
$\FF_{1}$, which is called \textit{the field with one element},
so that we can regard $\Spec \ZZ$
as an open curve defined over $\FF_{1}$,
and consider the $n$-fold object 
\[
\ZZ^{\otimes n}=\ZZ \otimes_{\FF_{1}} \ZZ \otimes_{\FF_{1}} \cdots
\otimes_{\FF_{1}} \ZZ
\]
with a multiple zeta function defined over it.
Although $\ZZ^{\otimes n}$ is not defined appropriately yet,
the preferred multiple zeta function is constructed
\cite{Kurokawa}.

Also, we would like to obtain the
compactification of $\Spec \ZZ$ over $\FF_{1}$
so that we could formulate Lefschetz-type
formula for the complete zeta function
\cite{Deninger}:
\[
\hat{\zeta}(s)=\prod_{i=0}^{2}\textstyle{\det_{\infty}}
\left(\Frac{1}{2\pi}
(s-\Theta)|H^{i}(X,\mathcal{R})\right)^{(-1)^{i+1}}.
\]
Connes has shown the determinantal representation
of the Riemann zeta function \cite{Connes1},
and furthermore gave a geometric representation,
by considering a function space on 
a projective line over $\FF_{1}$ \cite{Connes2}.

These results are suggesting the importance
of the theory of schemes over $\FF_{1}$,
apart from the philosophy of Arakelov.
However, the definition of $\FF_{1}$
has not reached a full agreement yet.
See \cite{Lorscheid} for the survey in this topic.

Let us go back to try for the compactification 
of $\Spec \ZZ$.

Recall that, when we are given a (non-compact) smooth
curve $X$ over a base field $k$,
we can construct its universal compactification
as a Zariski-Riemann space $\ZR(X,k)$ , namely
the spaces of valuation rings over $k$:
\[
\xymatrix{
\Spec k(X) \ar[r] \ar[d] & \overline{X}=\ZR(X,k) \ar[ld] \\
\Spec k
}
\]
Therefore, if we wish some analogy to hold,
$\overline{\Spec \ZZ}$ should then be the
Zariski-Riemann space of $\Spec \ZZ$ over $\FF_{1}$:
\[
\xymatrix{
\Spec \ZZ \ar[d] \ar[r] & \overline{\Spec \ZZ}=\ZR(\Spec \ZZ,\FF_{1})
\ar[dl] \\
\Spec \FF_{1}.
}
\]
However, since $\ZZ$ is the initial object in the category of rings,
we cannot have a `base field' $\FF_{1}$
in the category of rings;
we must widen our perspectives.

Some experts say that $\FF_{1}$-algebras should be
regarded as a monoid, and schemes over $\FF_{1}$
is a geometric object constructed from monoids.
One way to look at is
that $\FF_{1}$ is the initial element
in the category of commutative monoids with absorbing
elements: $\FF_{1}=\{0,1\}$.
(We can further attach an idempotent additive structure
so that $\FF_{1}$ becomes a Boolean algebra,
but this is not essential.)
However, it is doubtful that we can
recover the infinite place, only by considering
the multiplicative monoid structure:
indeed, we can define and consider Zariski-Riemann spaces
for a morphism of commutative monoids.
When applying this to $\FF_{1} \to \ZZ$,
we obtain a `proper space' $X$ over $\Spec \FF_{1}$.
However, this has infinitely many infinite places.
This happens since we ignore the additive structure,
and hence also the archimedean norm structure
of $\ZZ$.

This observation shows that we cannot totally abandon
the additive structure.

Let us look more closely.
On a finite place $p$ of $\ZZ$,
we obtain a local ring (more precisely,
a discrete valuation ring) $\ZZ_{(p)}$,
and by completion we obtain the ring $\ZZ_{p}$ of $p$-adic
integers.
This is the `unit disk' $\{x \in \QQ_{p} \mid |x|_{p} \leq 1\}$
of of the $p$-adic field $\QQ_{p}$.
If we wish to have an analogy of this
for the infinite place, then the objects
corresponding to $\ZZ_{(p)}$ (resp.
$\ZZ_{p}$, $\QQ_{p}$) should be
$\mathbf{D}\QQ=\{x \in \QQ \mid |x|_{\infty} \leq 1\}$,
(resp. $\mathbf{D}\RR=\{ x \in \RR \mid |x|_{\infty} \leq 1\}$,
$\RR$).
However, we run into a problem since
the unit disks $\mathbf{D}\QQ$ and $\mathbf{D}\RR$
are \textit{not} rings: they are multiplicative monoids,
but are not closed under addition.
 
This is the central motivation of introducing
a new algebra in this paper:
we want to have an algebraic type $V$ with a multiplicative
monoid structure such that,
\begin{enumerate}
\item
$V$-algebras share good properties with those of rings, and
\item
the category of $V$-algebras includes the unit disks
$\mathbf{D}\QQ$, $\mathbf{D}\RR$ shown above.
\end{enumerate}
The $V$-algebras are what we call \textit{convexoid rings}
in this paper.

Let us review $\mathbf{D}\QQ$.
This multiplicative monoid is not closed under
addition; however, we can always think of 
taking the mean value
$(a+b)/2$ of two elements $a,b \in \mathbf{D}\QQ$.
This binary operation $(a,b) \mapsto (a+b)/2$
will be denoted by $\boxplus$.
This operation does not satisfy associativity.
However, the distribution law holds,
and its behaviour resembles to that of rings very much.
Moreover, we can think of convexoid ring spectra
and schemes, just as in the case of rings.
This is because the general scheme theory
does not require associativity of the underlying operators \cite{Takagi1},
such as $\boxplus$.

Fortunately, the theorem of 
Ostrowski (Theorem \ref{thm:ostrowski})
tells that we can obtain the preferred valuations of $\QQ$
only by assuming the condition $1 \boxplus 1 \in R^{\times}$,
where $R$ is the valuation convexoid ring corresponding to
the valuation.
This condition corresponds to the triangular inequality:
$|a+b| \leq |a| +|b|$.
Hence, if we denote by $R_{0}$ the initial object
of convexoid rings, then the compactification $\overline{\Spec \ZZ}$
can be obtained over $R_{0}[(1\boxplus 1)^{-1}]$,
except for the finite place $p=2$, which is the antipode of
the infinite place.

Until now, we don't need multi-convexoid rings.
However, some problem arise when considering
projective convexoid schemes.
Let us review the construction $\Proj A$
for a commutative ring $A$.
$\Proj A$ is covered by the open sets of the form
$D_{+}(f)$, where $f$ is a homogeneous element of $A$,
and $D_{+}(f)$ is isomorphic to 
\[
\Spec A_{(f)}=\{ a/f^{n} \mid \deg a=n\deg f\}.
\]
When we try to apply this theory
to graded convexoid rings, we cannot give an
appropriate convexoid structure on $A_{(f)}$ when $\deg f >1$,
but only a multi-convexoid structure.
This is why we introduced the notion
of multi-convexoids.
However, multi-convexoids still behave
fairly well, and does not bother when constructing schemes.

This is the rough idea of the theory
introduced in this paper.

\section{Convexoids}

An algebraic type $V$ is \textit{commutative}
(in the sense of \cite{Takagi1}),
if for any $m$-ary operator $\phi$ and any $n$-ary operator
$\psi$, the following holds:
\begin{multline*}
\phi (\psi (x_{11},\cdots,x_{1n}),\cdots,\psi (x_{m1},\cdots, x_{mn})) \\
=\psi (\phi (x_{11},\cdots, x_{m1}),\cdots, \phi (x_{1n}, \cdots, x_{mn}) ).
\end{multline*}

\begin{Def}
\label{def:convexoid}
Let $d$ be a positive integer.
A \textit{$d$-convexoid} is a quadruple $(G,\boxplus^{d},-,0)$
where $G$ is a set, $\boxplus^{d}$ (resp. $-,0$) is a $2^{d}$-ary
(resp. unary, constant) operator on $G$ such that
\begin{enumerate}[(a)]
\item $G$ is commutative,
\item $\boxplus^{d}$ is symmetric,
namely 
\[
\boxplus^{d}(a_{\sigma(1)},\cdots,a_{\sigma(2^{d})})
=\boxplus^{d}(a_{1},\cdots,a_{2^{d}})
\]
for any element $\sigma$ of the symmetric group $\mathfrak{S}_{2^{d}}$.
\item $\boxplus^{d}(a_{1},\cdots,a_{2^{d-1}},-a_{1},\cdots,-a_{2^{d-1}})=0$
for any element $a_{1},\cdots,a_{2^{d-1}} \in G$.
\end{enumerate}
We denote by $\cat{Cxd$^{d}$}$ the category of $d$-convexoids
and its homomorphisms.
When $d=1$, we simply say `convexoids' and drop
the superscript.
Also, we write $a \boxplus b$ instead of 
$\boxplus^{1}(a,b)=\boxplus(a,b)$.
\end{Def}

By the commutativity,
the Hom set $\cat{Cxd$^{d}$}(M,N)$ canonically becomes
a convexoid for any convexoid $M,N$,
and the composition becomes bilinear.
Also, we can define the tensor product
$M \otimes(-):\cat{Cxd$^{d}$} \to \cat{Cxd$^{d}$}$ for any convexoid $M$
as the left adjoint of $\cat{Cxd$^{d}$}(M,-)$.
This gives a closed symmetric monoidal structure on $\cat{Cxd$^{d}$}$.
When we do not specify $d$,
we merely say multi-convexoids instead of $d$-convexoids.

A \textit{$d$-convexoid ring} is a monoid object
in $\cat{Cxd$^{d}$}$.
We denote by $\cat{CxdRing$^{d}$}$ the category
of $d$-convexoid rings and their homomorphisms.

\begin{Def}
Let $A$ be a $d$-convexoid ring.
\begin{enumerate}
\item The constant $\gamma_{A}=\boxplus^{d}(1,0,\cdots,0)$
is the \textit{fundamental constant} of $A$.
Note that this is in the center of $A$.
\item $A$ is \textit{normalized}, if $\gamma_{A}=1$.
\end{enumerate}
\end{Def}
This fundamental constant plays the key role in this algebra.

Let $(A,\boxplus^{d})$ be a $d$-convexoid ring,
and $u \in A$ be any element.
Then, we can define another $d$-convexoid structure
$\tilde{\boxplus}^{d}$ by setting
\[
\tilde{\boxplus}^{d}(a_{1},\cdots,a_{2^{d}})
=u\cdot \boxplus^{d}(a_{1},\cdots,a_{2^{d}}).
\]
This means that, we have many choices
of a $\boxplus^{d}$-structure on a convexoid ring $A$
and it is crucial to preserve this flexibility
when we consider convexoid schemes.

\begin{Def}
\label{def:cxd:weak:hom}
\begin{enumerate}
\item A map $f:A \to B$ between two $d$-convexoid
rings is a \textit{weak homomorphism} if 
\begin{enumerate}[(a)]
\item $f$ is a homomorphism of multiplicative monoids,
\item $\gamma_{B}f(\boxplus^{d}(a_{1},\cdots,a_{2^{d}}))
=f(\gamma_{A})\boxplus^{d}(f(a_{1}),\cdots,f(a_{2^{d}}))$, and
\item $\gamma_{B}$ and $f(\gamma_{A})$
generates the same ideal in $B$.
\end{enumerate}
\item Two $d$-convexoid structures $\boxplus^{d}_{1}$
and $\boxplus^{d}_{2}$ on a $d$-convexoid ring $A$
are \textit{equivalent}, if
the identity map $(A,\boxplus^{d}_{1}) \to (A,\boxplus^{d}_{2})$
is a weak isomorphism.
\end{enumerate}
\end{Def}
It is easy to see that weak homomorphisms
are closed under compositions.

\begin{Prop}
Let $A$ be a $d$-convexoid ring.
\begin{enumerate}
\item $A$ is equivalent to a normalized
$d$-convexoid ring if and only if $\gamma_{A}$ is invertible.
\item In particular if $d=1$,
then $A$ is equivalent to a ring if and only if $\gamma_{A}$
is invertible.
\end{enumerate}
\end{Prop}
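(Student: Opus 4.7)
Both statements reduce to direct checks of the weak-homomorphism conditions in Definition~\ref{def:cxd:weak:hom}, combined with the construction $\tilde{\boxplus}^{d}=u\cdot\boxplus^{d}$ discussed immediately above that definition.

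For (1), the forward direction is immediate: if $(A,\boxplus^{d})$ is equivalent to a normalized structure $\tilde{\boxplus}^{d}$ with $\tilde{\gamma}=1$, then condition~(c) applied to the identity $(A,\boxplus^{d})\to(A,\tilde{\boxplus}^{d})$ forces $\gamma_{A}$ and $1$ to generate the same ideal in $A$, i.e.\ $\gamma_{A}\in A^{\times}$. For the converse, assume $\gamma_{A}$ is invertible and set $u=\gamma_{A}^{-1}$; since $\gamma_{A}$ is central so is $u$. Define $\tilde{\boxplus}^{d}(a_{1},\ldots,a_{2^{d}})=u\cdot\boxplus^{d}(a_{1},\ldots,a_{2^{d}})$. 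The axioms (a)--(c) of Definition~\ref{def:convexoid} transfer from $\boxplus^{d}$ to $\tilde{\boxplus}^{d}$ by pulling the central scalar $u$ out of each slot via bilinearity of multiplication, and bilinearity with multiplication is inherited in the same way, so $(A,\tilde{\boxplus}^{d})$ is again a $d$-convexoid ring. Its fundamental constant is $u\cdot\gamma_{A}=1$, so it is normalized; and the three conditions of Definition~\ref{def:cxd:weak:hom} for the identity map are immediate, condition~(b) collapsing to the tautology $\boxplus^{d}(\vec a)=\gamma_{A}\cdot(\gamma_{A}^{-1}\boxplus^{d}(\vec a))$ and condition~(c) being automatic since $\gamma_{A}$ and $1$ are both units.

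For (2), any ring is tautologically a $1$-convexoid ring with $\boxplus=+$ and $\gamma=1+0=1$, so the forward implication follows from (1). For the converse, by (1) it suffices to show that every normalized $1$-convexoid ring is already a ring. Bilinearity of multiplication against $\boxplus$ gives
\[
\boxplus(a,0)=\boxplus(a\cdot 1,\,a\cdot 0)=a\cdot\boxplus(1,0)=a\cdot 1=a,
\]
so $0$ is a two-sided identity for $\boxplus$. Associativity then follows from the Eckmann--Hilton interchange of $\boxplus$ with itself (axiom~(a) of Definition~\ref{def:convexoid}):
\[
\boxplus(a,\boxplus(b,c))=\boxplus(\boxplus(a,0),\boxplus(b,c))=\boxplus(\boxplus(a,b),\boxplus(0,c))=\boxplus(\boxplus(a,b),c).
\]
Together with symmetry and the negation axiom this makes $(A,\boxplus,-,0)$ an abelian group, and bilinearity of multiplication upgrades $A$ to a ring.

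The only mildly delicate step is checking that $\tilde{\boxplus}^{d}=\gamma_{A}^{-1}\cdot\boxplus^{d}$ really satisfies the convexoid axioms, but the centrality of $\gamma_{A}^{-1}$ makes this a direct bilinearity manipulation rather than a real obstacle. The substantive content of the proposition lies in (2): once $\gamma_{A}=1$, associativity of $\boxplus$ is \emph{forced} by the Eckmann--Hilton interchange, which is what allows one to conclude ``ring'' and not merely ``normalized convexoid ring''.
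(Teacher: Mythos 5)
Your proof is correct and follows essentially the same route as the paper: part (1) via the rescaled structure $\tilde{\boxplus}^{d}=\gamma_{A}^{-1}\cdot\boxplus^{d}$ already set up before Definition~\ref{def:cxd:weak:hom}, and part (2) via the unit law $a\boxplus 0=(1\boxplus 0)a=a$ followed by the interchange computation $\boxplus(\boxplus(a,b),\boxplus(0,c))=\boxplus(\boxplus(a,0),\boxplus(b,c))$, which is word for word the paper's associativity argument. You simply spell out more of the details (the forward direction of (1) and the verification that the rescaled operation is again a convexoid structure) than the paper, which only records the computation for (2).
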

\begin{proof}
\begin{enumerate}
\setcounter{enumi}{1}
\item It suffices to show that a normalized
convexoid ring is a ring.
 Associativity of $\boxplus$ is given by
\begin{multline*}
(a\boxplus b) \boxplus c=(a \boxplus b) \boxplus (1 \boxplus 0)c
=(a \boxplus b) \boxplus (c \boxplus 0) \\
=(a \boxplus 0) \boxplus (b \boxplus c)=a \boxplus (b \boxplus c).
\end{multline*}
Also, $0$ is the unit with respect to $\boxplus$:
\[
a=(1 \boxplus 0)a=a \boxplus 0.
\]
\end{enumerate}
\end{proof}

\begin{Cor}
The left adjoint of the underlying functor
$U:\cat{Ring} \to \cat{CxdRing}$ is given by
$R \mapsto R/\equiv$, where
$\equiv$ is the congruence generated by $1 \boxplus 0=1$.
\end{Cor}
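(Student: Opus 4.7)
The plan is to establish, for every convexoid ring $R$ and every ring $S$, a natural bijection
$$\Hom_{\cat{Ring}}(R/\!\equiv, S) \;\cong\; \Hom_{\cat{CxdRing}}(R, U(S))$$
witnessed by precomposition with the quotient map $q_R \colon R \to R/\!\equiv$. The key observation setting everything up is that the underlying functor $U$ sends a ring $S$ (with addition $+$) to the convexoid ring $(S, +, -, 0)$, and so $\gamma_{U(S)} = 1 \boxplus 0 = 1 + 0 = 1$; that is, every object in the essential image of $U$ is normalized.

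First, I would verify that $R/\!\equiv$ actually lies in $\cat{Ring}$. Since $\equiv$ is a congruence for the algebraic type of convexoid rings, the quotient is automatically a convexoid ring by general universal algebra. By the defining relation of $\equiv$, the class $\gamma_{R/\equiv} = [1 \boxplus 0]$ equals $[1] = 1$, so the quotient is normalized. Invoking the preceding Proposition, whose proof in fact establishes the stronger statement that a normalized convexoid ring is a ring (associativity of $\boxplus$ and unitality of $0$ are both derived from $1 \boxplus 0 = 1$), we conclude $R/\!\equiv \in \cat{Ring}$.

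Next, I would verify the universal property. Let $f \colon R \to U(S)$ be a convexoid ring homomorphism. Because $U(S)$ is normalized,
$$f(1 \boxplus 0) \;=\; f(1) \boxplus f(0) \;=\; 1 + 0 \;=\; 1 \;=\; f(1),$$
so $f$ identifies the generating pair of $\equiv$. Since $\equiv$ is defined as the smallest congruence containing this pair, $f$ factors uniquely through $q_R$ as $f = \bar{f} \circ q_R$, and the induced map $\bar{f} \colon R/\!\equiv \,\to S$ is then a ring homomorphism between rings (as both sides are convexoid rings in which the $\boxplus$-operation is ordinary addition). Conversely, any ring homomorphism $g \colon R/\!\equiv \,\to S$ composes with $q_R$ to yield a convexoid ring homomorphism $R \to U(S)$. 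These assignments are mutually inverse and natural in $R$ and $S$, yielding the adjunction.

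The main obstacle, such as it is, is entirely absorbed by the preceding Proposition: the only nontrivial content of this corollary is that imposing the single relation $1 \boxplus 0 = 1$ is enough to recover simultaneously both associativity of $\boxplus$ and the unit axiom for $0$. Everything else is the standard universal-algebraic pattern in which the reflection into a full subcategory defined by additional equations is computed as the quotient by the congruence generated by those equations.
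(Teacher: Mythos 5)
Your proposal is correct and follows exactly the route the paper intends: the paper states this as an immediate corollary of the preceding Proposition (whose proof shows a normalized convexoid ring is a ring), and your argument simply fills in the standard universal-algebra details — the quotient by the congruence generated by $1 \boxplus 0 = 1$ is normalized hence a ring, and any homomorphism into a ring $U(S)$ (which is itself normalized) identifies the generating pair and so factors uniquely through the quotient. Nothing to add.
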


\begin{Cor}
\label{cor:ring:loc:cxd}
Let $A$ be a convexoid ring.
Then, $A[\gamma^{-1}_{A}]$ is equivalent to a ring.
\end{Cor}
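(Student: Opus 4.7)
The plan is to reduce the statement directly to part (2) of the preceding Proposition by inspecting the fundamental constant of the localized ring.

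First, I would need to justify that the localization $A[\gamma_A^{-1}]$ exists in $\cat{CxdRing}$. Since $\cat{Cxd}$ is closed symmetric monoidal and $\cat{CxdRing}$ is the category of commutative monoid objects in it, the standard construction of localization at a multiplicative subset $\{1,\gamma_A,\gamma_A^2,\dots\}$ carries over: the underlying multiplicative monoid is localized in the usual way, and the $\boxplus$-operation on fractions is defined by clearing denominators using symmetry and the bilinearity of $\boxplus$ coming from commutativity of the algebraic type $V$. The natural map $\iota:A\to A[\gamma_A^{-1}]$ is then a homomorphism of convexoid rings with $\iota(\gamma_A)$ invertible.

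Next, I would verify that the fundamental constant of $A[\gamma_A^{-1}]$ is exactly $\iota(\gamma_A)$. By definition, the fundamental constant of any convexoid ring $B$ is $\boxplus(1_B,0_B)$; since $\iota$ is a homomorphism and preserves $1$, $0$, and $\boxplus$, we have
\[
\gamma_{A[\gamma_A^{-1}]} = \boxplus(1,0) = \iota(\boxplus(1,0)) = \iota(\gamma_A),
\]
which is invertible by construction.

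Finally, applying part (2) of the preceding Proposition to the convexoid ring $A[\gamma_A^{-1}]$ (noting $d=1$), the invertibility of its fundamental constant yields that $A[\gamma_A^{-1}]$ is equivalent (in the sense of Definition \ref{def:cxd:weak:hom}) to a ring.

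The main point to be careful about is the first step: making sure the localization really does live in $\cat{CxdRing}$ with the expected universal property, and in particular that the formula defining $\boxplus$ on fractions is well-defined and symmetric. This should follow formally from the commutativity of the algebraic type, but it is the only step that is not purely formal once the Proposition is in hand.
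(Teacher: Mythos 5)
Your proposal is correct and follows exactly the route the paper intends: the corollary is stated as an immediate consequence of part (2) of the preceding Proposition, obtained by noting that the fundamental constant of $A[\gamma_A^{-1}]$ is the (invertible) image of $\gamma_A$. Your extra care about the existence of the localization in $\cat{CxdRing}$ is a reasonable addition but not something the paper spells out.
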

This implies that, rings are in a sense,
`localizations' of convexoid rings.

\begin{Cor}
\label{cor:initial:cxd:ring}
The initial object of $\cat{CxdRing}$
can be realized as the smallest subset $R_{0}$ 
of the polynomial ring $\ZZ[\gamma]$ satisfying
\begin{enumerate}[(a)]
\item $0,1 \in R_{0}$, and
\item $f,g \in R_{0} \Rightarrow fg, \gamma(f+g), -f \in R_{0}$.
\end{enumerate}
The natural functor $F:\cat{CxdRing} \to \cat{Ring}$
gives a surjective convexoid ring homomorphism
$R_{0} \to \ZZ$ between the initial objects defined
by $\gamma \mapsto 1$.
\end{Cor}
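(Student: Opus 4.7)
The plan is to realise $R_{0}$ as a sub-convexoid-ring of $\ZZ[\gamma]$ and verify that it is initial.

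First I would equip $\ZZ[\gamma]$ itself with a convexoid ring structure by
\[
\boxplus(f, g) := \gamma(f+g),
\]
keeping the ordinary product, negation and constants $0, 1$. The axioms of Definition \ref{def:convexoid} reduce to polynomial identities: symmetry of $\boxplus$ and axiom (c) are immediate, the algebraic-type commutativity of $\boxplus$ with itself amounts to the equality $\gamma^{2}(a+b+c+d) = \gamma^{2}(a+c+b+d)$, and distributivity of $\cdot$ over $\boxplus$ follows from centrality of $\gamma$. The defining closure conditions for $R_{0}$ are then precisely closure under the sub-convexoid-ring operations, so $R_{0}$ is the smallest sub-convexoid-ring of $\ZZ[\gamma]$ containing $\{0, 1\}$.

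Next I would show $R_{0}$ is initial. For any convexoid ring $A$, a homomorphism $\phi\colon R_{0} \to A$ must send $0 \mapsto 0_{A}$ and $1 \mapsto 1_{A}$; since $R_{0}$ is generated by $\{0,1\}$ under the convexoid operations, $\phi$ is uniquely determined. For existence I would define $\phi(r)$ by recursion on a chosen formation of $r$ from $0, 1$, setting $\phi(fg) := \phi(f)\phi(g)$, $\phi(\gamma(f+g)) := \boxplus_{A}(\phi(f), \phi(g))$ and $\phi(-f) := -\phi(f)$.

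The main obstacle is well-definedness of this recursion: two distinct formations of the same polynomial in $\ZZ[\gamma]$ must yield the same value in $A$. Equivalently, letting $I$ denote the initial convexoid ring supplied abstractly by universal algebra, the canonical surjection $\pi\colon I \twoheadrightarrow R_{0}$ arising from the unique morphism $I \to \ZZ[\gamma]$ must be shown injective. I expect to argue this via a normal-form reduction using the convexoid axioms: distributivity $a\cdot \boxplus(b,c) = \boxplus(ab, ac)$, four-term commutativity $\boxplus(\boxplus(a,b),\boxplus(c,d)) = \boxplus(\boxplus(a,c),\boxplus(b,d))$, symmetry of $\boxplus$, cancellation $\boxplus(a,-a) = 0$, together with the commutative-monoid identities for $\cdot$, should let one push products inside, rearrange terms, and reduce any expression in $I$ to a canonical polynomial shape in $\gamma$ matching a unique element of $R_{0}$. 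This is the technical heart of the argument.

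Finally, $\ZZ$ equipped with $\boxplus(a,b) := a+b$ is a convexoid ring with $\gamma_{\ZZ} = 1$, and initiality of $R_{0}$ yields a unique convexoid ring homomorphism $R_{0} \to \ZZ$, which must send $\gamma = \boxplus(1,0) \mapsto 1$. Its image is a sub-convexoid-ring of $\ZZ$ containing $0, 1$, and since $n = \boxplus(n-1, 1)$ every integer lies in the image by induction, giving surjectivity. That this homomorphism is the value of the natural functor $F$ on $R_{0}$ then follows because left adjoints preserve initial objects.
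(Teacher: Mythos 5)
Your overall framing is the right one, and it is in fact more explicit than the paper, which states this Corollary with no proof at all (it is implicitly delegated to the general free-algebra machinery of \cite{Takagi1}). Your verification that $\boxplus(f,g)=\gamma(f+g)$ makes $\ZZ[\gamma]$ a convexoid ring is correct, as is the identification of $R_{0}$ with the image of the abstract initial object $I$, the uniqueness argument, and the final paragraph on $R_{0}\to\ZZ$. You have also correctly located where all the content of the statement lives: the injectivity of the canonical surjection $\pi\colon I\twoheadrightarrow R_{0}$.

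That step, however, is exactly what you do not prove. The statement ``the initial object \emph{can be realized as} a subset of $\ZZ[\gamma]$'' is equivalent to the assertion that the single model $\ZZ[\gamma]$ separates all elements of $I$, i.e.\ that any two term formations with the same polynomial evaluation are provably equal from the axioms; your proposal replaces this by ``I expect a normal-form reduction should work,'' which is a plan, not an argument. The difficulty is not negligible: because $I$ has no genuine addition, its elements are not all of the balanced homogeneous form $\boxplus^{n}(\pm1,\dots,\pm1,0,\dots,0)=m\gamma^{n}$; terms such as $\gamma\boxplus 1$ evaluate to inhomogeneous polynomials like $\gamma^{2}+\gamma$, and products of these produce arbitrary-looking polynomials, so a canonical shape (and confluence/termination of the rewriting toward it) must actually be exhibited. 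Two concrete ways to close the gap: (i) carry out the normal form honestly, showing every term reduces to a product-free $\boxplus$-tree whose evaluation determines it; or (ii) avoid normal forms by showing $\gamma_{I}$ is a non-zero-divisor in $I$, so that $I$ embeds into the ring $I[\gamma_{I}^{-1}]$, which one identifies with $\ZZ[\gamma^{\pm1}]$ via Corollary \ref{cor:ring:loc:cxd} and the retraction $\ZZ[\gamma^{\pm1}]\twoheadrightarrow I[\gamma_{I}^{-1}]\to\ZZ[\gamma^{\pm1}]$; but note that the paper's non-zero-divisor hypothesis on fundamental constants is a standing \emph{assumption}, not something known for free about $I$, so this too requires an argument. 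Until one of these is supplied, the proposal is an outline rather than a proof.
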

\begin{Lem}
\label{lem:surj:r0:dzz1/2}
\begin{enumerate}
\item $m\gamma^{n} \in R_{0}$ for any non-negative integers $m,n$
such that $|m|\leq 2^{n}$.
\item In particular, the homomorphism
$R_{0} \to \mathbf{D}\ZZ[1/2]$ ($\gamma \mapsto 1/2$)
is surjective.
\end{enumerate}
\end{Lem}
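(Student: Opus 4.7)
The plan is to prove (1) by induction on $n$, with the base case $n=0$ handled by the generators $0, 1$ together with closure under negation (so $|m|\leq 1$ is immediate), and the inductive step built by writing $m\gamma^{n+1}$ as the image of a pair under the operator $(f,g)\mapsto \gamma(f+g)$. Concretely, assuming the claim for $n$, any integer $m$ with $|m|\leq 2^{n+1}$ can be split as $m = a+b$ with $|a|,|b|\leq 2^{n}$: take for instance $a=\lceil m/2\rceil$, $b=\lfloor m/2\rfloor$ (and use closure under negation to reduce to $m\geq 0$ if desired). By induction $a\gamma^{n}, b\gamma^{n}\in R_{0}$, hence
\[
m\gamma^{n+1} \;=\; \gamma\bigl(a\gamma^{n} + b\gamma^{n}\bigr) \;\in\; R_{0}
\]
by the generating rule (b) of Corollary \ref{cor:initial:cxd:ring}.

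Part (2) will follow immediately from (1). The homomorphism $R_{0}\to \mathbf{D}\ZZ[1/2]$ sends $\gamma \mapsto 1/2$, hence $m\gamma^{n}\mapsto m/2^{n}$. Any element of $\mathbf{D}\ZZ[1/2]$ has the shape $m/2^{n}$ for some $m\in\ZZ$ and $n\geq 0$ with $|m/2^{n}|\leq 1$, i.e.\ $|m|\leq 2^{n}$; so by (1) it lies in the image. Surjectivity is therefore immediate once (1) is established.

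There is no real obstacle here: the whole content is the decomposition $m=a+b$ with both halves bounded by $2^{n}$, which is trivial but is exactly what makes the $\gamma$-scaled binary sum operator match the doubling of the admissible range of $m$. The only minor care-point is to state the induction so that negative $m$ are covered; this is handled painlessly by the closure of $R_{0}$ under $f\mapsto -f$, so it suffices to run the induction for $m\geq 0$ and then append negation at the end.
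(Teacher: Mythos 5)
Your proof is correct; the paper itself leaves this lemma as ``an easy calculation'' for the reader, and your argument --- induction on $n$ via the splitting $m=\lceil m/2\rceil+\lfloor m/2\rfloor$ fed into the generating rule $f,g\mapsto\gamma(f+g)$, with negation handling the sign --- is exactly the intended computation. Part (2) then follows as you say, since every element of $\mathbf{D}\ZZ[1/2]$ has the form $m/2^{n}$ with $|m|\leq 2^{n}$.
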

This is an easy calculation, and the proof is left to the reader.

\begin{Prop}
Let $\cat{Mnd$_{0}$}$ be
the category of (multiplicative) monoids
with absorbing elements.
For $M \in \cat{Mnd$_{0}$}$,
we can define the trivial $d$-convexoid structure on $M$
by $\boxplus^{d}(a_{1},\cdots,a_{2^{d}})\equiv 0$ for any 
$a_{1},\cdots,a_{2^{d}} \in M$.
Hence, we have a fully faithful functor
$\cat{Mnd$_{0}$} \to \cat{CxdRing$^{d}$}$.
\end{Prop}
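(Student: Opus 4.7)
The plan is to verify the proposition in three stages: first, that $(M, \boxplus^{d} \equiv 0)$ really does satisfy the axioms of a $d$-convexoid ring for any $M \in \cat{Mnd$_{0}$}$; second, that the construction is functorial; and third, that the resulting functor is both full and faithful.

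For the first stage, I will adopt the obvious conventions that the unary $-$ is taken to be the identity map (so that $-a := a$) and that $0 \in M$ is the given absorbing element. Under these choices the three axioms of Definition \ref{def:convexoid} become trivial: symmetry (b) holds because $\boxplus^{d}$ is constantly $0$; the inverse law (c) reduces to $0 = 0$; and the commutativity axiom (a) collapses to a short list of pairwise identities among $\boxplus^{d}, -, 0$, each of which is tautological or uses only $-0 = 0$. It remains to check that $\cdot\,:M \otimes M \to M$ and the unit are morphisms in $\cat{Cxd$^{d}$}$; the bilinearity
\[
a \cdot \boxplus^{d}(b_{1},\ldots,b_{2^{d}}) = \boxplus^{d}(a b_{1},\ldots,a b_{2^{d}})
\]
is automatic because both sides equal $0$ by absorption, and the compatibilities with $-$ and $0$ are handled in the same way.

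For the second and third stages, I will send a morphism $f: M \to N$ in $\cat{Mnd$_{0}$}$ to the very same underlying set-map, now viewed as a $d$-convexoid ring homomorphism. This is legitimate: $f$ preserves $\cdot$ and $1$ by hypothesis, sends $0$ to $0$ because the absorbing element is preserved by definition of $\cat{Mnd$_{0}$}$, and preserves $\boxplus^{d}$ and $-$ for trivial reasons (the sides being $0$ and $f(a)$ respectively). Faithfulness is immediate, since the underlying set-map is not altered. Fullness is the only stage carrying any content, and even there the argument is formal: any $d$-convexoid ring homomorphism between the trivially-structured objects is in particular a multiplicative monoid homomorphism preserving $0$, and so is already a morphism in $\cat{Mnd$_{0}$}$.

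The main obstacle, such as it is, lies in stage one: one must confirm that the algebraic-type commutativity axiom (a) really does reduce to the short list of checks I sketched, taking proper account of nullary operators. Once this finite verification is dispatched, the remainder of the proof is routine bookkeeping.
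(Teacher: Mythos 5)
Your proof is correct; the paper states this proposition without any proof, and your verification supplies exactly the routine checks being left to the reader. The one genuine choice you had to make --- taking the unary operator $-$ to be the identity on $M$ (so that axiom (c) and the compatibility $-0=0$ hold trivially) --- is the natural and essentially only available one for a bare monoid, and with it the commutativity axiom, the bilinearity of multiplication (which is just absorption of $0$), and fullness/faithfulness all go through as you describe.
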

This proposition shows that,
any monoid can be regarded as a convexoid ring.
However, the $\boxplus$-structure has no information
in this case.

Next, we will compare $d$-convexoid rings
and $e$-convexoid rings, where $e$ is a divisor of  $d$.
Let $(A,\boxplus^{e})$ be an $e$-convexoid ring.
Then we can canonically define a $d$-convexoid structure
by induction on $d$:
\begin{multline*}
\boxplus^{d}(a_{1},\cdots,a_{2^{d}})
=\boxplus^{e}(\boxplus^{d-e}(a_{1},\cdots,a_{2^{d-e}}),\cdots,
\boxplus^{d-e}(a_{2^{d}-2^{d-e}+1},\cdots,a_{2^{d}})).
\end{multline*}
\begin{Prop}
Suppose $d=er$ for some positive integers $e,r$.
Let $(A,\boxplus^{d})$ be a $d$-convexoid ring,
and suppose the fundamental constant
$\gamma_{A}$ is a $r$-th power of an invertible element $\mu$.
Then, we can define $el$-convexoid structure on $A$ for $1 \leq l \leq r$
by an descending induction on $l$:
\[
\boxplus^{el}(a_{1},\cdots,a_{2^{el}})
=\mu^{-1}\boxplus^{e(l+1)}(a_{1},\cdots,a_{2^{el}},0,\cdots,0).
\]
Moreover, the $d$-convexoid structure
 $\tilde{\boxplus}^{d}$ induced from $\boxplus^{e}$
 coincides with $\boxplus^{d}$.
\end{Prop}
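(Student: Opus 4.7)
The plan is to unfold the descending recursion into a closed form, verify the axioms from it, and then use the middle-four exchange (the self-commutativity of $\boxplus^{d}$) to match the ascending construction with $\boxplus^{d}$. First, by descending induction on $l$, starting from $\boxplus^{er}=\boxplus^{d}$, I would establish the closed form
\[
\boxplus^{el}(a_{1},\ldots,a_{2^{el}}) = \mu^{-(r-l)}\,\boxplus^{d}(a_{1},\ldots,a_{2^{el}},0,\ldots,0)
\]
with $2^{d}-2^{el}$ trailing zeros. The step substitutes the defining recurrence and the inductive closed form, and absorbs the scalar $\mu^{-1}$ through $\boxplus^{d}$ using the bilinearity $c\cdot\boxplus^{d}(\vec{x})=\boxplus^{d}(c\vec{x})$ supplied by the fact that $A$ is a monoid object in $\cat{Cxd$^{d}$}$; the zero pads are preserved because $\mu^{-1}\cdot 0=0$.

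From this closed form the $el$-convexoid axioms for $\boxplus^{el}$ follow by routine checks. Symmetry is inherited from $\boxplus^{d}$. Bilinearity with multiplication and the middle-four exchange for $\boxplus^{el}$ reduce to the corresponding properties of $\boxplus^{d}$, applied to the $2^{d}\times 2^{d}$ matrix obtained from the relevant $2^{el}\times 2^{el}$ matrix by padding with zeros. The negation axiom
$\boxplus^{el}(a_{1},\ldots,a_{2^{el-1}},-a_{1},\ldots,-a_{2^{el-1}})=0$
holds because, after padding, the $2^{d}$ arguments split into $2^{d-1}$ pairs of opposites: the given $(a_{i},-a_{i})$'s together with $(0,-0)$-pairs drawn from the padding, whose length $2^{d}-2^{el}$ is even.

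For the coincidence $\tilde{\boxplus}^{d}=\boxplus^{d}$, I would prove by ascending induction on $m\in\{1,\ldots,r\}$ the stronger statement
\[
\tilde{\boxplus}^{em}(a_{1},\ldots,a_{2^{em}}) = \mu^{-(r-m)}\,\boxplus^{d}(a_{1},\ldots,a_{2^{em}},0,\ldots,0),
\]
so that $m=r$ gives the result. The base $m=1$ is the first step at $l=1$, since $\tilde{\boxplus}^{e}=\boxplus^{e}$ by definition. For the inductive step, unfolding
$\tilde{\boxplus}^{e(m+1)}(\vec{a})=\boxplus^{e}(\tilde{\boxplus}^{em}(\vec{a}_{1}),\ldots,\tilde{\boxplus}^{em}(\vec{a}_{2^{e}}))$
via the inductive hypothesis and the closed form, and using bilinearity to collect the accumulated $\mu^{-1}$'s, reduces the claim to the identity
\[
\boxplus^{d}\bigl(\boxplus^{d}(\vec{a}_{1},0,\ldots,0),\ldots,\boxplus^{d}(\vec{a}_{2^{e}},0,\ldots,0),0,\ldots,0\bigr) = \mu^{r}\,\boxplus^{d}(a_{1},\ldots,a_{2^{e(m+1)}},0,\ldots,0).
\]
I expect this to be the main obstacle. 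The plan is to prove it by a single application of the middle-four exchange to the $2^{d}\times 2^{d}$ matrix in which, for $i\leq 2^{e}$, the $i$-th row places the block $\vec{a}_{i}$ of length $2^{em}$ in columns $(i-1)\cdot 2^{em}+1,\ldots, i\cdot 2^{em}$ and has zeros in the remaining entries, and in which rows $i>2^{e}$ are identically zero. The row-$\boxplus^{d}$'s reproduce the left-hand side by symmetry of $\boxplus^{d}$, while each of the first $2^{e(m+1)}$ columns isolates a single $a_{k}$, so its column-$\boxplus^{d}$ equals $\boxplus^{d}(a_{k},0,\ldots,0)=\gamma_{A}a_{k}=\mu^{r}a_{k}$ by bilinearity; the remaining columns contribute $0$. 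Pulling $\mu^{r}$ out on the column side by bilinearity yields the right-hand side. This is precisely where the hypothesis $\gamma_{A}=\mu^{r}$ enters essentially: it converts the accumulated $\mu^{-1}$'s from the ascending recurrence into the single factor $\mu^{-(r-m-1)}$ dictated by the closed form.
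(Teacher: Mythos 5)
Your proposal is correct and follows essentially the same route as the paper: the paper likewise records the closed form $\boxplus^{el}(a_{1},\ldots,a_{2^{el}})=\mu^{l-r}\boxplus^{d}(a_{1},\ldots,a_{2^{el}},0,\ldots,0)$, dismisses the axiom checks as straightforward, and proves $\tilde{\boxplus}^{d}=\boxplus^{d}$ by the same induction, with your block-diagonal middle-four-exchange identity appearing there (implicitly, via commutativity) as the step $\boxplus^{d}(\boxplus^{d}(\vec{a}_{1},0,\ldots),\ldots,0,\ldots,0)=\boxplus^{d}(\boxplus^{d}(a_{1},\ldots,a_{2^{d}}),0,\ldots,0)=\gamma_{A}\boxplus^{d}(\vec{a})$. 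You have merely made explicit the padding matrix and the point where $\gamma_{A}=\mu^{r}$ is used, which the paper leaves to the reader.
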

Note that
\[
\boxplus^{el}(a_{1},\cdots,a_{el})
=\mu^{l-r}\boxplus^{d}(a_{1},\cdots,a_{2^{el}},0,\cdots,0).
\]
\begin{proof}
It is a straightforward calculation that
$\boxplus^{el}$ gives a $el$-convexoid structure on $A$.
We will see that $\tilde{\boxplus}^{d}$
coincides with $\boxplus^{d}$ by induction:
\begin{multline*}
\tilde{\boxplus}^{d}(a_{1},\cdots,a_{2^{d}}) \\
=\mu^{1-r}\boxplus^{d}(
\mu^{-1}\boxplus^{d}(a_{1},\cdots,a_{2^{e(r-1)}},0,\cdots,0),\cdots, \\
\mu^{-1}\boxplus^{d}(a_{2^{d}-2^{e(r-1)}+1},\cdots,a_{2^{d}},0,\cdots,0 ),
0,\cdots,0) \\
=\mu^{-r}\boxplus^{d}(\boxplus^{d}(a_{1},\cdots,a_{2^{d}}),0,\cdots,0)
=\boxplus^{d}(a_{1},\cdots,a_{2^{d}}).
\end{multline*}
\end{proof}

\begin{Def}
\label{def:diff:wt:equiv}
Suppose $e$ is a divisor of a positive integer $d$.
Let $A,B$ be an $e$-convexoid ring
and a $d$-convexoid ring, respectively.
A map $f:A \to B$ is a \textit{weak homomorphism}
if $f$ decomposes into a sequence of maps
\[
A \stackrel{\tilde{f}}{\to}
 B ^{\prime} \stackrel{j}{\to} B
\]
such that
\begin{enumerate}
\item $B^{\prime}$ is a $d$-convexoid ring,
\item $j$ is a weak isomorphism in the sense of Definition 
\ref{def:cxd:weak:hom},
\item the $d$-convexoid structure of $B^{\prime}$
is induced by an $e$-convexoid structure $\boxplus^{e}_{B}$, and
\item $\tilde{f}$ is a weak homomorphism
(in the sense of Definition \ref{def:cxd:weak:hom})
with respect to this $e$-convexoid structure.
\end{enumerate}
A $d$-convexoid structure $\boxplus^{d}$
and an $e$-convexoid structure $\boxplus^{e}$
on $A$ are \textit{equivalent}, if
the identity maps $(A, \boxplus^{e}) \to (A,\boxplus^{m})$
and $(A,\boxplus^{d}) \to (A, \boxplus^{m})$ are
a weak isomorphisms for some $m$-convexoid structure $\boxplus^{m}$,
with $m$ a common multiple of $d$ and $e$.
\end{Def}
We can verify that a composition of
two weak homomorphism becomes again a weak homomorphism.

With the aid of Corollary \ref{cor:ring:loc:cxd},
we obtain:
\begin{Cor}
Let $(A,\boxplus^{d})$ be a $d$-convexoid ring.
Then, $A[\gamma_{A}^{-1}]$ is equivalent to a ring.
\end{Cor}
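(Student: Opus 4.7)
The plan is to reduce to the $d=1$ case (essentially Corollary \ref{cor:ring:loc:cxd}) by invoking the preceding Proposition that converts a $d$-convexoid structure into an $e$-convexoid structure. The key point is that once $\gamma_{A}$ is inverted, a harmless rescaling of $\boxplus^{d}$ makes the fundamental constant equal to $1=1^{d}$, which is trivially a $d$-th power of an invertible element, supplying exactly the hypothesis needed to extract a $1$-convexoid structure.

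First I would rescale on $A[\gamma_{A}^{-1}]$ by setting
\[
\tilde{\boxplus}^{d}(a_{1},\cdots,a_{2^{d}})
=\gamma_{A}^{-1}\boxplus^{d}(a_{1},\cdots,a_{2^{d}}),
\]
which, by the flexibility of $\boxplus^{d}$-structures emphasized in the text, is again a $d$-convexoid structure, now with fundamental constant $\tilde{\gamma}=\gamma_{A}^{-1}\boxplus^{d}(1,0,\cdots,0)=1$. The identity map $(A[\gamma_{A}^{-1}],\boxplus^{d})\to(A[\gamma_{A}^{-1}],\tilde{\boxplus}^{d})$ is a weak isomorphism in the sense of Definition \ref{def:cxd:weak:hom}: conditions (a) and (b) are immediate from the definition of $\tilde{\boxplus}^{d}$, and (c) holds because both $\gamma_{A}$ and $1$ generate the unit ideal in $A[\gamma_{A}^{-1}]$.

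Next, since $\tilde{\gamma}=1=1^{d}$ with $\mu=1$ invertible, the preceding Proposition applies with $e=1$ and $r=d$ to produce a $1$-convexoid structure on $A[\gamma_{A}^{-1}]$ given by $\boxplus^{1}(a_{1},a_{2})=\tilde{\boxplus}^{d}(a_{1},a_{2},0,\cdots,0)=\gamma_{A}^{-1}\boxplus^{d}(a_{1},a_{2},0,\cdots,0)$. Its fundamental constant is $\boxplus^{1}(1,0)=\gamma_{A}^{-1}\gamma_{A}=1$, so by the proof of part (2) of the earlier Proposition on normalized convexoid rings it is in fact a ring. The ``moreover'' clause of the applied Proposition guarantees that the $d$-convexoid structure induced back from this $\boxplus^{1}$ is precisely $\tilde{\boxplus}^{d}$.

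To conclude, take $m=d$ as a common multiple of $1$ and $d$ and set $\boxplus^{m}=\tilde{\boxplus}^{d}$. The identity $(A[\gamma_{A}^{-1}],\boxplus^{d})\to(A[\gamma_{A}^{-1}],\tilde{\boxplus}^{d})$ is a weak isomorphism by the rescaling step, and the identity $(A[\gamma_{A}^{-1}],\boxplus^{1})\to(A[\gamma_{A}^{-1}],\tilde{\boxplus}^{d})$ fits the decomposition in Definition \ref{def:diff:wt:equiv} trivially, with the intermediate $d$-convexoid ring $B'$ equal to the target $(A[\gamma_{A}^{-1}],\tilde{\boxplus}^{d})$. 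Hence $\boxplus^{d}$ and $\boxplus^{1}$ are equivalent structures on $A[\gamma_{A}^{-1}]$, and $A[\gamma_{A}^{-1}]$ is equivalent to a ring. The only slightly delicate point is matching the prescribed decomposition in Definition \ref{def:diff:wt:equiv}, but it collapses because $B'$ coincides with the target.
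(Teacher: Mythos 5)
Your proof is correct and follows exactly the route the paper intends (the paper itself only gestures at the argument via Corollary \ref{cor:ring:loc:cxd}): rescale by $\gamma_{A}^{-1}$ to normalize the fundamental constant, apply the $d=er$ proposition with $e=1$, $r=d$, $\mu=1$ to extract a $1$-convexoid structure, and conclude via the normalized-implies-ring proposition. The verification of the weak-isomorphism conditions and of the decomposition required by Definition \ref{def:diff:wt:equiv} is carried out correctly.
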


Next, we will investigate the spectrum
for commutative convexoid rings.
Here, we will restrict our attention to the Zariski topology.

In the sequel, we assume that
any convexoid ring is commutative,
and its fundamental constant is
a non-zero divisor.
 
\begin{Prop}
\label{prop:invar:spec:equiv}
Let $(R, \boxplus^{d})$ be a commutative $d$-convexoid ring,
and $\tilde{\boxplus}^{d}$ another $d$-convexoid structure on $R$.
\begin{enumerate}
\item Suppose $\tilde{\boxplus}^{d}$ is defined by
 $\tilde{\boxplus}^{d}=L_{u} \circ \boxplus^{d}$
 for some $u \in R$, where $L_{u}$ is the left multiplication of $u$.
Then, there is an immersion $\Spec R \to \Spec \tilde{R}$.
\item If $\boxplus^{d}$ and $\tilde{\boxplus}^{d}$ are equivalent
in the sense of Definition \ref{def:cxd:weak:hom},
then ideals of $(R,\boxplus^{d})$ are exactly
those of $(R,\tilde{\boxplus}^{d})$.
In particular, $\Spec (R,\boxplus^{d})$ and 
$\Spec (R,\tilde{\boxplus}^{d})$
are canonically homeomorphic.
\end{enumerate}
In particular for a commutative ring $R$ we have immersions
\[
\Spec^{\cat{Ring}}R \to
\Spec^{\cat{Cxd}}(R,\boxplus) \to 
\Spec^{\cat{CMnd$_{0}$}}R,
\]
where $a \boxplus b=u(a+b)$ is the convexoid structure on $R$
defined by a constant $u \in R$,
and $\Spec^{\cat{Cxd}}(R,\boxplus)$
(resp. $\Spec^{\cat{CMnd$_{0}$}}R$)
is the spectrum obtained by regarding $R$
as a convexoid ring (resp. commutative monoid with 
an absorbing element).
\end{Prop}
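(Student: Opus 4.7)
The plan is to reduce both statements to tracking two closure conditions on subsets of $R$, namely closure under $\boxplus^{d}$ versus closure under $\tilde{\boxplus}^{d}$; the multiplicative structure of $R$ is unchanged in both parts, so primality of a multiplicatively-closed complement is the same for both structures. For (1) one direction of the implication is automatic; for (2) the extra rigidity of a weak isomorphism is shown to force the twist factor to be a unit, whence the implication reverses and the two spectra coincide on the nose.

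For part (1), let $\mathfrak{a}\subseteq R$ be an ideal of $(R,\boxplus^{d})$ and $a_{1},\dots,a_{2^{d}}\in\mathfrak{a}$; then $\boxplus^{d}(a_{1},\dots,a_{2^{d}})\in\mathfrak{a}$, so
\[
\tilde{\boxplus}^{d}(a_{1},\dots,a_{2^{d}})=u\cdot\boxplus^{d}(a_{1},\dots,a_{2^{d}})\in u\mathfrak{a}\subseteq\mathfrak{a}.
\]
Thus every ideal of $(R,\boxplus^{d})$ is an ideal of $(R,\tilde{\boxplus}^{d})$, and primality is preserved. The assignment $\mathfrak{p}\mapsto\mathfrak{p}$ yields a continuous injection $\Spec R\hookrightarrow\Spec\tilde{R}$; its image contains the basic open $D(u)\subseteq\Spec\tilde{R}$, since a prime of $\tilde{R}$ avoiding $u$ is forced by primality to be closed under $\boxplus^{d}$ as well. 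To promote this set-theoretic description to an immersion of convexoid schemes, I would verify, within the affine framework of \cite{Takagi1}, that the structure sheaf of $\tilde{R}$ restricted to the image agrees with that of $R$; the cleanest check is after inverting $u$, where by Corollary \ref{cor:ring:loc:cxd} applied to each twisted structure both sides reduce to the same localization.

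For part (2), write $\gamma,\tilde{\gamma}$ for the fundamental constants of the two structures. Condition (b) of Definition \ref{def:cxd:weak:hom}, applied to the identity map, reads
\[
\tilde{\gamma}\cdot\boxplus^{d}(a_{1},\dots,a_{2^{d}})=\gamma\cdot\tilde{\boxplus}^{d}(a_{1},\dots,a_{2^{d}}),
\]
while condition (c) gives $(\gamma)=(\tilde{\gamma})$ in $R$. Writing $\tilde{\gamma}=c\gamma$ and $\gamma=c'\tilde{\gamma}$, one obtains $(1-cc')\gamma=0$, and since $\gamma$ is a non-zero divisor by the standing assumption, $cc'=1$, so $c\in R^{\times}$. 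Substituting into the relation above and cancelling $\gamma$ (again a non-zero divisor) yields $\tilde{\boxplus}^{d}=L_{c}\circ\boxplus^{d}$ with $c$ a unit. Part (1) now applies in both directions, with $u=c$ and $u=c^{-1}$, so the two ideal lattices coincide and the spectra are canonically homeomorphic.

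The main obstacle is the implication $(\gamma)=(\tilde{\gamma})\Rightarrow c\in R^{\times}$: this is precisely where the standing assumption that fundamental constants are non-zero divisors is essential, and without it the notion of weak isomorphism would fail to be rigid enough to preserve spectra. The concluding chain of immersions for a commutative ring $R$ then follows from part (1) by two successive twists: the first arrow $\Spec^{\cat{Ring}}R\to\Spec^{\cat{Cxd}}(R,\boxplus)$ is obtained via the twist $\boxplus=L_{u}\circ(+)$, observing that ring-theoretic ideals of $R$ coincide with convexoid ideals of $(R,+)$; and the second arrow $\Spec^{\cat{Cxd}}(R,\boxplus)\to\Spec^{\cat{CMnd$_{0}$}}R$ is obtained via the extreme twist $u=0$, giving the trivial convexoid structure $\boxplus\equiv0$ whose ideals are exactly the multiplicative monoid ideals.
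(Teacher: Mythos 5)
Your proposal is correct and follows essentially the same route as the paper: part (1) by observing that every $\boxplus^{d}$-ideal is closed under $L_{u}\circ\boxplus^{d}$ (the paper packages this as a surjective lattice homomorphism $\Omega(\tilde{R})\to\Omega(R)$), and part (2) by using the weak-isomorphism conditions together with the standing non-zero-divisor assumption on $\gamma$ to get $\tilde{\boxplus}^{d}=L_{c}\circ\boxplus^{d}$ and run part (1) in both directions. Your extra observation that $c$ must be a unit is a mild refinement of the paper's argument (which only needs $\gamma=u\tilde{\gamma}$ and its symmetric counterpart), but it is the same underlying mechanism.
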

\begin{proof}
\begin{enumerate}
\item We have a natural map $\Omega(\tilde{R}) \to \Omega(R)$,
sending $\mathfrak{a}$ to the ideal generated by $\mathfrak{a}$.
This is a surjective lattice homomorphism,
hence induces an immersion $\Spec R \to \Spec \tilde{R}$
of the corresponding morphism of coherent spaces.
\item Let $\gamma,\tilde{\gamma}$ be the fundamental constants
of $(R,\boxplus^{d}),(R,\tilde{\boxplus}^{d})$, respectively.
Since $\boxplus$ and $\tilde{\boxplus}$ are equivalent,
$R[\gamma^{-1}]$ and $R[\tilde{\gamma}^{-1}]$ are equal,
and $\gamma=u\tilde{\gamma}$ for some $u \in R$.
Suppose $\mathfrak{a}$ is an ideal of $(R,\boxplus^{d})$,
and $a_{1},\cdots ,a_{2^{d}}\in \mathfrak{a}$. Then,
\[
\boxplus^{d}(a_{1},\cdots,a_{2^{d}})
=\gamma\sum_{i}a_{i}
=u\tilde{\gamma}\sum_{i}a_{i}
=u\tilde{\boxplus}^{d}(a_{1},\cdots,a_{2^{d}}).
\]
This shows that $\mathfrak{a}$ is also an ideal of $(R,\tilde{\boxplus})$.
\end{enumerate}
\end{proof}

\begin{Prop}
\label{prop:inv:wt:spec}
Let $e$ be a divisor of a positive integer $d=er$,
 $(A,\boxplus^{e})$ an $e$-convexoid ring, and
and $\boxplus^{d}$ the $d$-convexoid structure
induced by $\boxplus^{e}$.
Then, the natural map 
\[
\Omega(A, \boxplus^{e}) \to \Omega(A,\boxplus^{d})
\]
is an isomorphism.
Its inverse is given by $\mathfrak{a} \mapsto \langle \mathfrak{a} \rangle$,
where $\langle \mathfrak{a} \rangle$
is the ideal generated by $\mathfrak{a}$.
\end{Prop}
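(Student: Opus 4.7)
The plan is to verify that the natural map $\mathfrak{a}\mapsto\mathfrak{a}$ is a well-defined isomorphism $\Omega(A,\boxplus^{e})\to\Omega(A,\boxplus^{d})$ with inverse given by $\mathfrak{b}\mapsto\langle\mathfrak{b}\rangle_{(A,\boxplus^{e})}$. For well-definedness of the forward map, observe that since $\boxplus^{d}$ is by definition a nested composition of $\boxplus^{e}$'s, any subset of $A$ closed under $\boxplus^{e}$ is automatically closed under $\boxplus^{d}$; hence every $(A,\boxplus^{e})$-ideal is also an $(A,\boxplus^{d})$-ideal. Injectivity on $\Omega$ is automatic because the radical $\sqrt{\mathfrak{a}}=\{x\in A:x^{n}\in\mathfrak{a}\text{ for some }n\}$ depends only on the multiplicative monoid structure, not on the choice of $\boxplus$-operator.

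For surjectivity, given a finitely generated $(A,\boxplus^{d})$-ideal $\mathfrak{b}$, set $\mathfrak{a}=\langle\mathfrak{b}\rangle_{(A,\boxplus^{e})}$, which is finitely generated by the same generators. Since $\mathfrak{b}\subseteq\mathfrak{a}$, we have $\sqrt{\mathfrak{b}}\subseteq\sqrt{\mathfrak{a}}$. The reverse inclusion will follow from the key claim that $\sqrt{\mathfrak{b}}$ is itself closed under $\boxplus^{e}$: granting this, $\sqrt{\mathfrak{b}}$ becomes an $(A,\boxplus^{e})$-ideal containing $\mathfrak{b}$, hence contains $\mathfrak{a}=\langle\mathfrak{b}\rangle_{(A,\boxplus^{e})}$, and so $\sqrt{\mathfrak{a}}\subseteq\sqrt{\sqrt{\mathfrak{b}}}=\sqrt{\mathfrak{b}}$. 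This simultaneously establishes surjectivity and that the two maps are mutually inverse on $\Omega$.

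The heart of the argument is proving the closure of $\sqrt{\mathfrak{b}}$ under $\boxplus^{e}$. Given $c_{1},\ldots,c_{2^{e}}\in\sqrt{\mathfrak{b}}$, I would choose a common exponent $N$ with $c_{i}^{N}\in\mathfrak{b}$ for every $i$, and set $z=\boxplus^{e}(c_{1},\ldots,c_{2^{e}})$. Iterating the distributivity of multiplication over $\boxplus^{e}$ (i.e. the $A$-multilinearity) yields, by an easy induction on $M$,
\[
z^{M}=\boxplus^{Me}\bigl(c_{i_{1}}c_{i_{2}}\cdots c_{i_{M}}:(i_{1},\ldots,i_{M})\in[1,2^{e}]^{M}\bigr),
\]
where $\boxplus^{Me}$ denotes the $M$-fold nesting of $\boxplus^{e}$ and the total number of monomials is exactly $(2^{e})^{M}=2^{Me}$. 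I would then pick $M$ to be any multiple of $r$ with $M>(N-1)2^{e}$: pigeonhole forces each monomial $c_{i_{1}}\cdots c_{i_{M}}$ to contain some $c_{i}$ with multiplicity at least $N$, making the monomial divisible by $c_{i}^{N}\in\mathfrak{b}$; and the divisibility $r\mid M$ makes $Me$ a multiple of $d$, so that $\boxplus^{Me}$ is a nested composition of $\boxplus^{d}$'s, which by hypothesis preserves $\mathfrak{b}$. Hence $z^{M}\in\mathfrak{b}$ and $z\in\sqrt{\mathfrak{b}}$.

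The main obstacle is exactly this last step, and has two subtle aspects: (i) verifying the combinatorial expansion of $z^{M}$ into a single $\boxplus^{Me}$ of all length-$M$ monomials, which requires care because $\boxplus^{e}$ is not associative in the usual sense, only nest-compatible with its own iterates; and (ii) choosing $M$ simultaneously large enough for the pigeonhole estimate and divisible by $r$ so that the outer operator $\boxplus^{Me}$ genuinely decomposes into iterated $\boxplus^{d}$'s and not merely iterated $\boxplus^{e}$'s. Everything else in the proposition is formal.
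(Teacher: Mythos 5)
Your proof is correct and follows essentially the same route as the paper: the paper likewise reduces everything to showing that the radical $\sqrt{\mathfrak{a}}$ of a finitely generated $\boxplus^{d}$-ideal is closed under $\boxplus^{e}$, by expanding a power of $\boxplus^{e}(a_{1},\cdots,a_{2^{e}})$ multilinearly into an iterated $\boxplus^{d}$ of monomials and applying the same pigeonhole bound to place each monomial in $\mathfrak{a}$. The only cosmetic difference is that the paper first forms $b=\boxplus^{e}(a_{1},\cdots,a_{2^{e}})^{r}=\boxplus^{d}(a^{p})_{p}$ and then raises $b$ to a large power $N$, whereas you take a single exponent $M$ divisible by $r$ from the start.
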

This shows that the underlying topological space
of the spectrum of a multi-convexoid ring is invariant under
weak isomorphisms (in the sense of Definition \ref{def:diff:wt:equiv}).

\begin{proof}
Let $\mathfrak{a} \in \Omega(A,\boxplus^{d})$
be an finitely generated ideal.
It suffices to show that
\[
\sqrt{\mathfrak{a}}=\{a \in A \mid a^{n} \in \mathfrak{a} \
(\exists n)\}
\]
is an ideal in $(A,\boxplus^{e})$,
hence equal to $\langle \mathfrak{a} \rangle$.
For any $a_{1},\cdots,a_{2^{e}} \in \sqrt{\mathfrak{a}}$,
set
\[
b=\boxplus^{e}(a_{1},\cdots,a_{2^{e}})^{r}
=\boxplus^{d}(a^{p})_{p} \in \mathfrak{a},
\]
where $p$ runs through all maps
$\{1,\cdots,r\} \to 2^{e}$
and $a^{p}=\prod_{i=1}^{2^{e}}a_{i}^{\# p^{-1}(i)}$.
This shows that $b^{N} \in \mathfrak{a}$
for sufficiently large $N$.
\end{proof}

\section{Ostrowski's Theorem}

A commutative convexoid ring is \textit{integral},
if $0$ is a prime ideal.
As in the case of rings, we can define
the fractional field $Q(R)$ of an integral convexoid ring $R$.
A commutative integral convexoid ring $R$
is a \textit{valuation convexoid ring},
if for any non-zero element $x$ of the fractional field $K=Q(R)$,
either $x$ or $x^{-1}$ is in $R$.
As in the case of rings,
the set $I(R)$ of $R$-submodules of $K$ becomes
totally ordered by the inclusion relations.
We have a group homomorphism
\[
|\cdot |:K^{\times} \to I(R) \setminus 0 \quad (a \mapsto aR),
\]
and this satisfies the following properties:
\begin{enumerate}
\item $\ker |\cdot |=R^{\times}$, and
\item $|a\boxplus b| \leq \max \{|a|,|b|\}$.
\end{enumerate}

The following theorem is a variant of
the classical Ostrowski's theorem:
\begin{Thm}
\label{thm:ostrowski}
Let $R$ be a non-trivial valuation convexoid ring 
with $Q(R)$ weak-isomorphic to $\QQ$ and $1 \boxplus 1$ invertible.
Then, $R$ is either
\begin{enumerate}
\item the local ring $\ZZ_{(p)}$ at the finite place $p \neq 2$
with $\boxplus=+$, or
\item the unit disk
$\mathbf{D}\QQ=\{x \in \QQ \mid |x|_{\infty} \leq 1\}$,
with $a \boxplus b=\pm(a+b)/2$.
\end{enumerate}
\end{Thm}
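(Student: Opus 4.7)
The plan is to identify $R$ with a multiplicative submonoid of $\QQ$ via the hypothesised weak isomorphism $Q(R)\cong\QQ$, write out the shape of $\boxplus$ in ordinary additive terms, and then reduce to the classical Ostrowski theorem on $\QQ$.

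First I would set $\gamma=1\boxplus 0\in R$. Since $\QQ$ is a ring with $\gamma_{\QQ}=1$, condition (b) of Definition \ref{def:cxd:weak:hom} applied to a weak isomorphism $f:Q(R)\to\QQ$ collapses to $f(a\boxplus b)=f(\gamma_{Q(R)})\,(f(a)+f(b))$. Identifying $Q(R)$ with $\QQ$ through $f$, this gives the identity
\[
a\boxplus b=\gamma(a+b)
\]
in $\QQ$, with $\gamma\in R$ the image of $\gamma_{Q(R)}$. In particular $1\boxplus 1=2\gamma$, so the hypothesis on $1\boxplus 1$ becomes $2\gamma\in R^{\times}$, and the valuation axiom says that for every $x\in\QQ^{\times}$, either $x$ or $x^{-1}$ lies in $R$.

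Second, substituting into the inequality $|a\boxplus b|\leq\max\{|a|,|b|\}$ recorded just before Theorem \ref{thm:ostrowski} yields $|a+b|\leq|\gamma|^{-1}\max\{|a|,|b|\}$ for all $a,b\in\QQ$. This is a multiplicatively-bounded triangle inequality with a fixed constant, and the standard argument shows that the induced valuation on $\QQ$ has rank $1$ (a uniform bound of this form is incompatible with non-trivial convex subgroups of the value group) and is therefore equivalent to an honest absolute value $\QQ\to\RR_{\geq 0}$. By classical Ostrowski, $|\cdot|$ is then equivalent to $|\cdot|_p$ for some prime $p$, or to $|\cdot|_\infty$. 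Specialising the displayed inequality to $a=b=1$ gives $|2\gamma|\leq 1$, and since $2\gamma\in R^{\times}$ we in fact have $|2\gamma|=1$.

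Finally I would split into the two cases. In the $p$-adic case, $|2|_p\cdot|\gamma|_p=1$ together with $|\gamma|_p\leq 1$ (from $\gamma\in R$) excludes $p=2$ and forces $|\gamma|_p=1$, so $\gamma\in R^{\times}$; the proposition characterising when a convexoid ring is equivalent to a ring ($\gamma_A$ invertible) then yields that $R$ is equivalent to a ring, which being a valuation subring of $\QQ$ with fraction field $\QQ$ must be $\ZZ_{(p)}$ with $\boxplus=+$. In the archimedean case, $|2\gamma|_\infty=1$ and $|\gamma|_\infty\leq 1$ force $\gamma=\pm 1/2$, and then $R=\{x\in\QQ\mid|x|_\infty\leq 1\}=\mathbf{D}\QQ$ with $a\boxplus b=\gamma(a+b)=\pm(a+b)/2$. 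The main obstacle is the rank-$1$ verification in the second step; once the valuation is known to land in $\RR_{>0}$, the remainder is classical Ostrowski plus bookkeeping of the constant $\gamma$.
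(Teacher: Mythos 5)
Your proposal is correct, and it reaches the theorem by a genuinely shorter route than the paper. Both arguments start identically: use the weak isomorphism to write $a\boxplus b=\gamma(a+b)$ inside $\QQ$, note that invertibility of $1\boxplus 1=2\gamma$ gives $|2\gamma|=1$, and hence $|a+b|\le |2|\,\max\{|a|,|b|\}$. From there the paper does \emph{not} cite the classical theorem: it splits into the cases $|\NNN|\le 1$ and $|\NNN|\not\le 1$ and re-derives Ostrowski's estimates internally --- uniqueness of the prime $p$ with $|p|<1$ via a B\'ezout identity $mp^{e}+nq^{e}=1$, and in the archimedean case an expansion of $b^{n}$ in base $a$ carried out with the induced $2^{m}$-ary operation $\boxplus^{m}$, leading to $|b|\le\max\{|a|^{\log_{a}b},1\}$. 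You instead first show the value group is archimedean of rank one (the paper merely asserts this ``since $R$ is a subring of $\QQ$,'' so your justification via the uniform bound is a genuine improvement), embed it in $\RR_{>0}$, and invoke classical Ostrowski in its $C$-bounded form $|a+b|\le C\max\{|a|,|b|\}$; this is legitimate, since the standard proofs work under exactly that hypothesis rather than the triangle inequality. What your shortcut gives up is the paper's stated point that the argument can be run entirely with the convexoid operations, i.e.\ ``in completely algebraic terms''; the endgame ($|2\gamma|=1$ forcing $\gamma=\pm 1/2$ at infinity and excluding $p=2$ at finite places) is the same in both.
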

\begin{proof}
Let $|\cdot|$ be the valuation corresponding to $R$.
Since $R$ is a subring of $\QQ$,
the value group $I(R)$ is automatically archimedean.
Hence, we may regard $I(R)$ as a multiplicative submonoid
of positive real numbers.
Also, note that $|\cdot|$ is determined by the value on $\NNN$.
\begin{itemize}
\item \textit{Case $|\NNN| \leq 1$}:
We will show that $R$ is equal to $\ZZ_{(p)}$
for some prime $p$.
It suffices to show that there is a unique prime $p$
such that $|p|<1$.

First, we will show the uniqueness:
suppose there exist two primes $p,q$ such that $|p|,|q|<1$.
Then $|p|^{e},|q|^{e}<|1/2|$ for sufficiently large integer $e$.
Also, there are two integers $m,n$ such that 
\[
\gamma^{-1}(mp^{e}\boxplus nq^{e})=mp^{e}+nq^{e}=1
\]
since $p^{e}$ and $q^{e}$ are coprime.
Since $2\gamma=1\boxplus 1$ is invertible, we have
\begin{multline*}
|1|=|2||1/2\gamma ||mp^{e}\boxplus nq^{e}|
\leq |2|\max \{ |m||p|^{e},|n||q|^{e}\} \\
\leq |2| \max \{ |p|^{e},|q|^{e}\}
<|2||1/2|=|1|,
\end{multline*}
a contradiction. Hence such a prime $p$ is unique.

Since $R$ is non-trivial, there exists an integer $n$
with $|n|<1$. Hence, there is a prime divisor $p$
of $n$ such that $|p|<1$.
\item \textit{Case $|\NNN| \not\leq 1$}:
Let $\boxplus^{m}:R^{2^{m}} \to R$ be 
the $m$-convexoid structure induced from $\boxplus$.
For three integers $a,b,n \in \NNN$ bigger than $1$,
$b^{n}$ can be expanded in the form
\[
b^{n}=\Sum_{0 \leq i <2^{m}}c_{i}a^{i},
\]
where $m$ is an integer satisfying
$|2|^{m-1} \leq n \log_{a}b <|2|^{m}$,
and $c_{i}=\{0,1,\cdots,a-1\}$
and is zero for $i \geq j$ for some $j \leq n\log_{a}b+1$.
Note that $|2|>1$ from the assumption $|\NNN| \not\leq 1$.
Also, there is an integer $l$ such that $a \leq 2^{l}$.
Then 
\[
b^{n}=2^{m}(2\gamma)^{-m}\boxplus^{m}
(c_{1}a,c_{2}a^{2},\cdots,c_{j-1}a^{j-1},0,\cdots0),
\]
and $|c_{i}| \leq |2|^{l}$.
These yield
\begin{multline*}
|b^{n}| \leq |2|^{m}\max_{i}|c_{i}a^{i}|
\leq |2|^{m+l}\max\{|a|^{j},1\} \\
\leq (n\log_{a}b)\cdot |2|^{l+1}\max \{|a|^{n\log_{a}b},1\}.
\end{multline*}
Taking the $n$-th root of both sides, we obtain
\[
|b| \leq (n\log_{a}b\cdot |2|^{l+1})^{1/n}
\max \{|a|^{\log_{a}b},1\}.
\]
Taking the limit $n \to \infty$, we have
$|b| \leq \max \{|a|^{\log_{a}b},1\}$.
Now, take $b$ as $|b| >1$.
Then $|a|$ must also be bigger than $1$,
hence $\log |b|/\log b\leq \log |a|/\log a$.
By symmetry, this inequality is in fact equal.
Hence, the valuation $|\cdot|$ is equivalent
to the absolute norm,
and $|2\gamma|=1$ shows that $\gamma=\pm 1/2$.
\end{itemize}
\end{proof}

\begin{Thm}
\label{thm:ostrowski:general}
The above Ostrowski's theorem is valid
for any algebraic field:
let $K$ be an algebraic field,
and $R$ be a non-trivial valuation convexoid ring
with $1 \boxplus 1$ invertible.
Then, $R$ is either
\begin{enumerate}
\item the local ring $\mathcal{O}_{K,\mathfrak{p}}$,
where the characteristic of the residue field
$\kappa(\mathfrak{p})$ is not $2$, or
\item $\mathbf{D}_{\sigma}K=\{x \in K \mid |\sigma(x)| \leq 1\}$,
where $\sigma:K \to \CC$ is an immersion of fields.
\end{enumerate}
\end{Thm}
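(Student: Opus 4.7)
The plan is to reduce to the case $K=\QQ$ handled by Theorem~\ref{thm:ostrowski}, and then to invoke the classical theory of extensions of valuations on algebraic number fields.

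First, form the restriction $R_{\QQ}:=R\cap\QQ$. The inherited operations make $R_{\QQ}$ into a valuation convexoid subring of $\QQ$ with the same fundamental constant $\gamma=\boxplus(1,0)$, and $1\boxplus 1$ remains invertible in $R_{\QQ}$ since it is an element of $\QQ$ that is a unit in $R$. Because $K/\QQ$ is algebraic, the classical fact that an algebraic extension of a trivially valued field is trivially valued implies that $R_{\QQ}$ is non-trivial whenever $R$ is. Thus Theorem~\ref{thm:ostrowski} applies and identifies $R_{\QQ}$ with either $\ZZ_{(p)}$ for some prime $p\neq 2$ (the non-archimedean case) or $\mathbf{D}\QQ$ (the archimedean case).

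In the non-archimedean case $\gamma=1$, and the valuation on $K$ extends the $p$-adic valuation on $\QQ$. By the standard theory of extensions of discrete valuations to algebraic extensions, such an extension corresponds to a prime $\mathfrak{p}$ of $\mathcal{O}_{K}$ lying over $p$, and $R=\mathcal{O}_{K,\mathfrak{p}}$; the residue field $\kappa(\mathfrak{p})$ is an extension of $\FF_{p}$, hence of characteristic $p\neq 2$, giving case~(1). In the archimedean case $\gamma=\pm 1/2$ and the valuation on $K$ extends the usual archimedean absolute value on $\QQ$. By the classical result that archimedean absolute values on an algebraic number field $K$ correspond bijectively (up to complex conjugation) to embeddings $\sigma:K\to\CC$, we obtain $R=\mathbf{D}_{\sigma}K$ for some such $\sigma$, giving case~(2). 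In either case the $\boxplus$-structure on $R$ is forced: by Corollary~\ref{cor:ring:loc:cxd}, $K=Q(R)$ is equivalent to a ring (as $\gamma\in K^{\times}$), so $\boxplus$ on $R$ must be $a\boxplus b=\gamma(a+b)$ with addition taken in $K$.

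The main technical obstacle lies in the archimedean case: once one has identified $R_{\QQ}$ with $\mathbf{D}\QQ$, one must check that every valuation convexoid extension to $K$ comes from a complex embedding. This ultimately reduces, via the fact that $K$ itself is equivalent to a ring, to the classical classification of archimedean places on algebraic number fields, where the substantive content of the argument resides.
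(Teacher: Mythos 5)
Your overall strategy --- restrict to $\QQ$, establish non-triviality there, apply Theorem~\ref{thm:ostrowski}, then follow the classical extension theory for number fields --- is exactly the strategy of the paper's proof, which models itself on the argument in Borevich--Shafarevich. The difficulty is that the two steps you outsource to ``classical facts'' are precisely the two steps the paper singles out as the ones that \emph{cannot} be quoted off the shelf and must be re-proved in the convexoid setting. The valuation $|\cdot|:K^{\times}\to I(R)$ attached to a valuation convexoid ring is not a classical absolute value: it satisfies only $|a\boxplus b|\leq\max\{|a|,|b|\}$, which unwinds to $|a+b|\leq|2|\max\{|a|,|b|\}$ with $|2|>1$ in the archimedean case. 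Consequently, (i) the statement ``an algebraic extension of a trivially valued field is trivially valued'' does not apply verbatim; the paper reproves it by expanding $x=\sum_{i}q_{i}b_{i}$ via the induced operation $\boxplus^{m}$ and extracting the explicit bound $\nu(x)\leq 2^{n}\max_{i}\nu(b_{i})$, whence boundedness and triviality. (ii) The classification of the archimedean extensions likewise cannot simply be cited: one must show that a valuation convexoid ring of $\CC$ with $R\cap\RR=\mathbf{D}\RR$ equals $\mathbf{D}\CC$, which the paper does via the bound $\nu(z^{n})\leq 2\max\{\Re(z),|\Im(z)|\cdot\nu(\sqrt{-1})\}$ for $|z|=1$. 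Your closing paragraph correctly locates the difficulty in the archimedean case but then declares that it ``reduces to the classical classification,'' which is the assertion to be proved, not a proof. (One \emph{can} bridge the gap by observing that $|a+b|\leq 2\max\{|a|,|b|\}$ forces equivalence to a genuine absolute value by Artin's lemma, but some such argument must be supplied.)

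There is also a flaw at the very first step: $R_{\QQ}=R\cap\QQ$ need not be closed under $\boxplus$, because the fundamental constant $\gamma=1\boxplus 0$ need not lie in $\QQ$. For instance, $\mathbf{D}\QQ(i)$ with $a\boxplus b=\tfrac{i}{2}(a+b)$ is a legitimate valuation convexoid ring in which $1\boxplus 1=i$ is invertible, yet $1\boxplus 0=i/2\notin\QQ$, so the inherited operation does not restrict to $R\cap\QQ$. This is repairable: since $1\boxplus 1=2\gamma$ is assumed invertible, you may replace $\boxplus$ by the equivalent structure $(2\gamma)^{-1}\boxplus$, whose fundamental constant is $1/2\in\QQ$ and which changes neither the underlying set, nor the ideals, nor the valuation (Proposition~\ref{prop:invar:spec:equiv}); alternatively, restrict the \emph{valuation} $|\cdot|$ to $\QQ^{\times}$ rather than restricting the convexoid ring, which is what the paper does. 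As written, however, the claim that ``the inherited operations make $R_{\QQ}$ into a valuation convexoid subring of $\QQ$ with the same fundamental constant'' is false, and the subsequent appeal to Theorem~\ref{thm:ostrowski} is not yet licensed.
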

\begin{proof}
Here, we will only check the key points
which are different from the proof of 
original Ostrowski's theorem. (cf. \cite{BS}, pp. 278-280)
Let $|\cdot|$ be the valuation associated to $R$.
\begin{enumerate}
\item The valuation $\nu$ associated to $R$
is non-trivial on $\QQ$.
Indeed, suppose the valuation is trivial on $\QQ$, and
let $b_{1},\cdots,b_{n}$ be a basis of $K$
over $\QQ$.
Then it turns out that $\nu(x) \leq 2^{n}\max_{i}\nu(b_{i})$
for any $x \in K$, which is a contradiction
since a non-trivial valuation is never bounded.
\item Let $R$ be a valuation convexoid ring
of $\CC$, which satisfies $R \cap \RR=\mathbf{D}\RR$.
Then $R=\mathbf{D}\CC$.
Indeed, suppose $\nu(z) >1$ for some $z \in \CC$
such that $|z|=1$. Then for any $n$,
\[
\nu(z^{n}) \leq 2\max\{ \Re (z), |\Im (z)|\cdot \nu(\sqrt{-1})\}
\leq 2\max \{1,\nu(\sqrt{-1})\}
\]
which is a contradiction since $\nu(z^{n}) \to \infty$ as $n \to \infty$.
For general $0 \neq z \in \CC$, we have
\[
\nu(z)=\nu(|z|)\nu(z/|z|)=|z|.
\]
\end{enumerate}
The remaining part of the proof is completely
identical to the original one, hence we will omit it.
\end{proof}

\section{Convexoid schemes}

The main goal of this paper is to obtain
the compactification $\overline{\Spec \ZZ}=\Spec \ZZ \cup \{\infty\}$
in the form of Zariski-Riemann space.

We can consider (weak) `convexoid ring schemes'
in the sense of \cite{Takagi1};
the definitions are analogous to those of schemes
and weak $\scr{C}$-schemes.

However, this is not sufficient for our purpose,
since we need to admit weak isomorphisms
for restriction maps and transition maps.

\begin{Def}
A \textit{convexoid scheme}
is a pair $(X,\scr{O}_{X})$ such that
$X$ is a coherent space and 
$\scr{O}_{X}$ is a $\cat{CMnd$_{0}$}$-valued
sheaves, such that
\begin{enumerate}
\item $X$ is locally isomorphic 
(as a monoid-valued space) to the spectrum
of a multi-convexoid ring:
namely, there is a finite open covering $X=\cup_{i}U_{i}$
and isomorphisms $\phi_{i}:\Spec^{\cat{Cxd}} R_{i} \to U_{i}$ 
of monoid-valued spaces, where $R_{i}$
is a commutative multi-convexoid ring.
\item If $\phi_{i}^{-1}(V)$ is affine for some
open subset $V$ of $U_{i} \cap U_{j}$, then $\phi_{j}^{-1}(V)$
is also affine, and the transition map
\[
\sigma_{ji}:\phi_{i}^{-1}(V) \to \phi_{j}^{-1}(V)
\]
is a weak isomorphism.
\item $\sigma_{kj}\sigma_{ji}=\sigma_{ki}$
for any $i,j,k$.
\end{enumerate}
A morphism $f:X \to Y$ of convexoid schemes
is a morphism of monoid-valued spaces
such that the induced homomorphism
\[
\scr{O}_{Y,f(x)} \to \scr{O}_{X,x}
\]
is a weak homomorphism of multi-convexoid rings 
and is \textit{local} for any $x \in X$,
namely the inverse image of the maximal ideal of $\scr{O}_{X,x}$
coincides with that of $\scr{O}_{Y,f(x)}$.
\end{Def}
Here, we defined the appropriate notion
of convexoid schemes for our purpose.
Note that for the set of sections $\scr{O}_{X}(U)$
for an open set $U$ of a convexoid scheme $X$
may not be a multi-convexoid ring, but only a
multiplicative monoid.

Before we construct the compactification
$\overline{\Spec \ZZ}$,
we need to define the base convexoid scheme $S_{0}$.
Let $R_{0}$ be the initial object of $\cat{CxdRing}$,
and set
\[
U_{1}=\Spec R_{0}[\gamma^{-1}],\
U_{2}=\Spec R_{0}[(2\gamma)^{-1}].
\]
These are open subsets of $\Spec R_{0}$.
Note that $R_{0}[\gamma^{-1}]$ has a ring structure,
isomorphic to $\ZZ[\gamma^{\pm 1}]$.

There is an automorphism on
\[
U_{1} \cap U_{2} =\Spec^{\cat{Cxd}} R_{0}[(2\gamma^{2})^{-1}]
\simeq \Spec^{\cat{Cxd}} \ZZ[1/2][\gamma^{\pm 1}]
\]
induced by the automorphism
\[
\phi:\ZZ[1/2][\gamma^{\pm 1}] \to \ZZ[1/2][\gamma^{\pm 1}]
\quad (\gamma \mapsto \gamma/2).
\]
We have a diagram of open immersions
\[
U_{1} \leftarrow U_{1} \cap U_{2} \stackrel{\phi^{*}}{\to}
U_{1} \cap U_{2} \to U_{2},
\]
which gives a `twist' patching $S$ of $U_{1}$ and $U_{2}$,
namely $S$ is defined by the pushout diagram
\[
\xymatrix{
U_{1} \cap U_{2} \ar[r] \ar[d]_{\phi^{*}} & U_{1} \ar[d] \\
U_{2} \ar[r] & S_{0}.
}
\]
We have a closed immersion
$f:\Spec \ZZ \to U_{1}$ defined by 
\[
R_{0}[\gamma^{-1}] \simeq \ZZ[\gamma^{\pm 1}] \to \ZZ \quad 
(\gamma \mapsto 1).
\]

If we stick to convexoid schemes,
we obtain a `fake closure' of $\Spec \ZZ$:
we have a homomorphism
\[
R_{1}=R_{0}[(2\gamma)^{-1}] \to \mathbf{D}\ZZ[1/2]
\quad (\gamma \mapsto 1/2)
\]
which is surjective by Lemma \ref{lem:surj:r0:dzz1/2}.
This induces a morphism
$g:\Spec \mathbf{D}\ZZ[1/2] \to U_{2}$,
which is a closed immersion since the complement of the image is
\[
\Spec R_{1}[(\gamma\boxplus (-1/2))^{-1}]
=\Spec R_{1}[(\gamma-1/2)^{-1}].
\]
\begin{Lem}
\label{lem:spec:disk:zz1/2}
The underlying space of the spectrum
$\Spec \mathbf{D}\ZZ[1/2]$ is set-theoretically
isomorphic to $\Spec \ZZ[1/2] \cup \{\infty\}$,
where $\infty$ is the pullback of the maximal ideal
$\{ a \in \QQ \mid |a|<1\}$ of $\mathbf{D}\QQ$.
As a coherent space, $\infty$ is the unique closed point.
\end{Lem}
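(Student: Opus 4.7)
The plan is to classify all prime ideals of $R := \mathbf{D}\ZZ[1/2]$ via the dichotomy of whether the fundamental constant $\gamma = 1/2$ lies in the prime, and then to extract the coherent-space structure from this classification.

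First I would verify that $\mathfrak{m}_\infty := \{a \in R : |a|_\infty < 1\}$ is a prime ideal. Closure under $R$-multiplication and $\boxplus$ follows from the averaging inequality $|a \boxplus b| = |(a+b)/2| \leq \max\{|a|, |b|\}$, while primality follows from $|ab| < 1 \Rightarrow |a| < 1$ or $|b| < 1$. The complement of $\mathfrak{m}_\infty$ in $R$ consists of the dyadic rationals of absolute value exactly $1$, namely $\{\pm 1\}$, which is precisely the unit group of $R$; hence $\mathfrak{m}_\infty$ is the unique maximal ideal. It is visibly the pullback of the maximal ideal $\{|a|<1\}$ of $\mathbf{D}\QQ$ along the inclusion $R \hookrightarrow \mathbf{D}\QQ$, so it may be named $\infty$.

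Next I would invoke Corollary~\ref{cor:ring:loc:cxd} together with the surjection of Lemma~\ref{lem:surj:r0:dzz1/2} to identify $R[\gamma^{-1}]$ with $\ZZ[1/2]$ as a ring. Thus the distinguished open subset $D(\gamma) \subset \Spec R$ is canonically identified with $\Spec \ZZ[1/2]$, and primes of $R$ not containing $\gamma$ correspond bijectively to primes of $\ZZ[1/2]$. It remains to show that the closed complement $V(\gamma)$ consists of the single point $\{\mathfrak{m}_\infty\}$. The key computation is $(\gamma) = \{b \in R : |b| \leq 1/2\}$: the inclusion $\supset$ follows by writing $b = (2b) \cdot \gamma$, which is legal because $2b \in R$ whenever $|b| \leq 1/2$, and the reverse inclusion follows because the right-hand side is already closed under $R$-multiplication and under $\boxplus$. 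Taking radicals, $\sqrt{(\gamma)} = \{a \in R : |a|^n \leq 1/2 \text{ for some } n\} = \mathfrak{m}_\infty$, since $|a|^n \to 0$ exactly when $|a| < 1$. Any prime of $R$ containing $\gamma$ therefore contains $\mathfrak{m}_\infty$, and by maximality equals $\mathfrak{m}_\infty$.

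Combining the two cases yields $\Spec R = \Spec \ZZ[1/2] \sqcup \{\mathfrak{m}_\infty\}$ set-theoretically. For the coherent-space structure, $\{\infty\} = V(\gamma)$ is closed, and every other prime is properly contained in $\mathfrak{m}_\infty$, so $\infty$ is the unique closed point. The only genuinely delicate step is the ideal computation $(\gamma) = \{|b|\leq 1/2\}$: it relies on the fact that $2\gamma = 1$ is a unit in $R$, which ensures the distributive identity $r \cdot \boxplus(a,b) = \boxplus(ra, rb)$ and keeps all ideal-generation steps inside $R$ rather than accidentally invoking the ambient ring structure of $\QQ$.
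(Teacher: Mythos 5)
Your proposal is correct and follows essentially the same route as the paper's proof: the dichotomy on whether a prime contains $\gamma=1/2$, the observation that $(\gamma)$ contains every $b$ with $|b|\leq 1/2$, the radical argument showing $|a|<1$ forces $a^{n}\in(\gamma)$ and hence $a\in\mathfrak{p}$ by primality, and the identification of the complement of $\mathfrak{m}_{\infty}$ with the unit group $\{\pm 1\}$. You spell out a few steps the paper leaves implicit (the full equality $(\gamma)=\{|b|\leq 1/2\}$ and the explicit appeal to the localization $R[\gamma^{-1}]\simeq\ZZ[1/2]$), but the argument is the same.
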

\begin{proof}
The inclusion $\Spec \ZZ[1/2] \cup \{\infty\}
 \subset \Spec \mathbf{D}\ZZ[1/2]$ is obvious.
 Let $\mathfrak{p}$ be a prime ideal of $\Spec \mathbf{D}\ZZ[1/2]$.
 If $\mathfrak{p}$ does not contain $1/2$,
 then $\mathfrak{p}$ is a pullback of a prime ideal of $\ZZ[1/2]$.
Suppose $\mathfrak{p}$ contain $1/2$.
Then $\mathfrak{p}$ contains any $a \in \mathbf{D}\ZZ[1/2]$
such that $|a| \leq 1/2$.
For any element $a \in \mathbf{D}\ZZ[1/2]$
 with its absolute value less than $1$,
we have $|a^{n}|<1/2$ for sufficiently large $n$.
Since $\mathfrak{p}$ is prime, $a$ must be in $\mathfrak{p}$.
It is obvious that this $\mathfrak{p}$ is the unique maximal ideal,
since its complement is the unit group $\{\pm 1\}$.
\end{proof}

Using the above lemma,
we define $Y_{0}$ by the pushout:
\[
\xymatrix{
\Spec \ZZ[1/2] \ar[r] \ar[d] & \Spec \ZZ \ar[d] \\
\Spec \mathbf{D}\ZZ[1/2] \ar[r] & Y_{0}.
}
\]
We have a commutative diagram
\[
\xymatrix{
\Spec \ZZ[1/2] \ar[r]^{f} \ar@{=}[d] & 
U_{1} \cap U_{2} \ar[d]^{\phi^{*}} \\
\Spec \ZZ[1/2] \ar[r]_{g} & U_{1} \cap U_{2},
}
\]
which shows that $f$ and $g$ patch up to give
a closed immersion $Y_{0} \to S_{0}$.
This can be regarded as the closure of $\Spec \ZZ$ in $S_{0}$
with its reduced induced subscheme structure.

\begin{Rmk}
\label{rmk:global:sec}
The monoid of sections $\scr{O}_{Y_{0}}(U)$
for an open set $U$ of $Y_{0}$ becomes a convexoid ring,
if and only if $U$ does not contain both two points $2$ and $\infty$:
$2$ and $\infty$ are antipodes to each other.
In particular, 
$\Gamma(Y_{0},\scr{O}_{Y_{0}})=\FF_{1^{2}}=\{0,\pm 1\}$
is only a multiplicative monoid.
This happens since the local fundamental constant
$1/2$ and $1$ (they are defined on $\Spec \mathbf{D}\ZZ[1/2]$
and $\Spec \ZZ$, respectively) cannot be extended globally.

However, since the infinity place $\infty$ of $\Spec \mathbf{D}\ZZ[1/2]$
is the maximal ideal, we cannot have $\mathbf{D}\QQ$
on the stalk $\scr{O}_{Y_{0},\infty}$;
we have $\mathbf{D}\ZZ[1/2]$ instead.
This is not what we want.
\end{Rmk}

\section{Graded convexoid rings and $\Proj$}

In the previous section,
we gave an ad hoc definition of the `fake closure' $Y_{0}$
of $\Spec \ZZ$.
At first sight, this seems to be a very artificial object.
But some reader may have noticed that it resembles
to the construction of projective line $\PP^{1}$
in algebraic geometry.
Indeed, $Y_{0}$ can be realized as $\Proj R_{0}$,
under a suitable definition.

\begin{Def}
\begin{enumerate}
\item A convexoid ring $A$ is \textit{\rom{(}$\NNN$-\rom{)}graded}, if:
\begin{enumerate}
\item $B=A[\gamma_{A}^{-1}]$ is a ($\ZZ$-)graded ring:
say $B=\oplus_{d \in \ZZ}B_{d}$,
where $B_{d}$ is the degree $d$ part.
\item $A$ is in the positive part:
$A \subset \oplus_{d \geq 0}B_{d}$.
\item For any $a \in A$,
$a_{d}$ is also in $A$ for any $d$,
where $a=\sum_{d}a_{d}$ is the homogeneous decomposition
of $a$ in $B$.
\end{enumerate}
We will denote $A \cap B_{d}$ by $A_{d}$,
and the set of homogeneous elements of $A$ by $A^{h}$.
Note that $A_{d} \neq A \cap B_{d}$ in general.
\item An ideal $\mathfrak{a}$ of a graded convexoid ring 
is \textit{homogeneous}, if $a \in \mathfrak{a}$
implies $a_{d} \in \mathfrak{a}$ for any $d$,
where $a=\sum a_{d}$ is the homogeneous decomposition of $a$.
\item For any graded convexoid ring $A$,
set $A_{+}=A \cap (\oplus_{d>0}B_{d})$.
\end{enumerate}
\end{Def}
We will assume the following convention for
any graded convexoid ring $A$:
\begin{eqnarray}
\label{graded:cxd:ring:1st}
\textit{$A_{+}$ is finitely generated as a homogeneous ideal of $A$.}
\end{eqnarray}
For a homogeneous element $a$,
its degree will be denoted by $|a|$.

\begin{Def}
Let $A$ be a commutative graded convexoid ring.
\begin{enumerate}
\item $\Proj A$ is the set of all homogeneous
prime ideals of $A$ which does not contain $A_{+}$.
The open basis of $\Proj A$ is given by the form
\[
D_{+}(f)=\{\mathfrak{p} \in \Proj A \mid 
f \notin \mathfrak{p}\},
\]
where $f$ is a homogeneous element of $A$.
Then $\Proj A$ becomes a coherent space,
and has a finite open covering $\Proj A=\cup_{f \in A^{h}} D_{+}(f)$,
by the assumption (\ref{graded:cxd:ring:1st}).
\item For $f \in A_{d}$, a $d$-convexoid ring $A_{(f)}$ is defined by
\[
A_{(f)}=\left\{ a/f^{n} \in A[f^{-1}] \mid 
\rom{$a$ is homogeneous of degree $dn$}\right\}.
\]
The $d$-convexoid structure on $A_{(f)}$ is given by
\[
\boxplus^{d}\left(\Frac{a_{1}}{f^{n_{1}}}, \cdots, 
\Frac{a_{2^{d}}}{f^{n_{2^{d}}}}\right)
=\Frac{1}{f^{N+|\gamma_{A}|}}
\boxplus^{d}_{A}(a_{1}f^{m_{1}},\cdots,a_{2^{d}}f^{m_{2^{d}}}),
\]
where $N=\sum_{i=1}^{2^{d}} n_{i}$
and $m_{i}=N-n_{i}$.
\item We have a homeomorphism
$D_{+}(f) \to  \Spec A_{(f)}$
given by $\mathfrak{p} \mapsto 
\{a/f^{n} \in A_{(f)} \mid a \in \mathfrak{p}\}$.
Its inverse is given by 
$\mathfrak{q} \mapsto \tilde{\mathfrak{q}}$,
where $\tilde{\mathfrak{q}}$
is the homogeneous ideal of $A$ generated by $a \in A^{h}$
such that $a^{|f|}/f^{|a|} \in \mathfrak{q}$.

\item We can define a monoid-valued sheaf $\scr{O}$
on $\Proj A$, so that $(D_{+}(f) ,\scr{O}|_{D_{+}(f)})$
is isomorphic to $\Spec A_{(f)}$ as a monoid-valued space.
This is well defined, since we have a natural isomorphism
$\varphi:A_{(f)}[f^{|g|}/g^{|f|}] \simeq A_{(g)}[g^{|f|}/f^{|g|}]$
of multiplicative monoids
for any two homogeneous elements $f,g \in A^{h}$.
Also, $(\Proj A,\scr{O})$ becomes a convexoid scheme,
since $\varphi$ is a weak isomorphism of multi-convexoid rings.
\end{enumerate}
\end{Def}
\begin{Rmk}
Note that $A_{(f)}$ does not have
a convexoid structure in general when $|f| >1$,
since the $\boxplus$-operation shifts the degree.
\end{Rmk}

We will apply $\Proj$ to the initial object $R_{0}$.
Note that $R_{0}$ is graded, by setting $\deg \gamma=1$.
\begin{Lem}
The degree $1$ part $(R_{0})_{1}$
of $R_{0}$ consists of $\pm \gamma,\pm 2\gamma$.
Also, $(R_{0})_{+}$ is generated by $(R_{0})_{1}$.
\end{Lem}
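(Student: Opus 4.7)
The plan is to prove both assertions by induction along the inductive construction of $R_{0}$ from Corollary \ref{cor:initial:cxd:ring}: $R_{0}$ is the smallest subset of $\ZZ[\gamma]$ containing $0, 1$ and closed under $f \mapsto -f$, $(f, g) \mapsto fg$, and $(f, g) \mapsto \gamma(f+g) = \boxplus(f, g)$. Each $f \in R_{0}$ will be viewed as a polynomial $\sum_{d} f_{d}\gamma^{d} \in \ZZ[\gamma]$.

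For the first claim, the inclusion $\{\pm\gamma, \pm 2\gamma\} \subset R_{0}$ is witnessed by $\gamma = \boxplus(1, 0)$, $2\gamma = \boxplus(1, 1)$, and negation. For the reverse inclusion, I carry out a joint induction on the pair $(f_{0}, f_{1})$ of low-degree coefficients of each $f \in R_{0}$, asserting that this pair always lies in $S := \{(0, k) : |k| \leq 2\} \cup \{(\pm 1, 0)\}$. The base cases $0, 1$ give $(0, 0)$ and $(1, 0) \in S$. Negation preserves $S$. The operation $\gamma(f+g)$ outputs $(0, f_{0}+g_{0})$ with $|f_{0}+g_{0}| \leq 2$ since $|f_{0}|, |g_{0}| \leq 1$. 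For products, $(fg)_{0} = f_{0}g_{0}$ and $(fg)_{1} = f_{0}g_{1} + f_{1}g_{0}$; the inductive hypothesis implies $f_{0} \neq 0 \Rightarrow f_{1} = 0$, so a case analysis on whether each of $f_{0}, g_{0}$ vanishes keeps the pair in $S$. Invoking the graded condition (c), which puts the component $f_{1}\gamma$ of any $f \in R_{0}$ inside $R_{0}$, then gives $(R_{0})_{1} \subset \{k\gamma : |k| \leq 2\}$.

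For the second claim, a preliminary observation---by the same style of induction---is that $f \in R_{0}$ with $f_{0} \neq 0$ forces $f \in \{\pm 1\}$: the operation $\gamma(f+g)$ always annihilates the constant term, while products and negations of $\pm 1$ stay within $\{\pm 1\}$. Letting $J = \langle (R_{0})_{1} \rangle$ denote the ideal generated by $\gamma$ and $2\gamma$, I show $(R_{0})_{+} \subset J$ by induction on construction. If $f = -g$ or $f = gh$ with $f_{0} = 0$, some factor has vanishing constant term, lies in $J$ by induction, and $f \in J$ follows by $R_{0}$-multiplication. If $f = \boxplus(g, h)$: when at least one of $g, h$ lies in $(R_{0})_{+}$, that one lies in $J$ by induction and then $\boxplus$-closure of the ideal yields $f \in J$; otherwise both $g, h$ have nonzero constant terms, so by the preliminary $g, h \in \{\pm 1\}$, and $f = \gamma(g+h) \in \{0, \pm 2\gamma\} \subset (R_{0})_{1} \subset J$.

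The main obstacle is the $\boxplus$ case of the second part: ensuring that $J$ absorbs $\boxplus(g, h)$ when $g \in J$ but $h \in R_{0} \setminus J$. This is resolved by noting that $\gamma \in J$ promotes $R_{0}$-multiplication to closure of $J$ under $\boxplus$ with arbitrary $R_{0}$-elements, via the decomposition $\boxplus(g, h) = \gamma g + \gamma h$ whose summands both lie in $J$. The preliminary restriction of $R_{0}$-elements with nonzero constant term to $\{\pm 1\}$ is the crucial structural input that makes the last subcase close.
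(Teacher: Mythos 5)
The paper offers no proof of this lemma (it is ``left to the reader''), so there is nothing to compare against; judging your argument on its own terms, the first half is correct. The invariant $(f_{0},f_{1})\in S$ is indeed preserved by negation, by $(f,g)\mapsto\gamma(f+g)$, and by products (using that $f_{0}\neq 0$ forces $f_{1}=0$), and it immediately pins down the degree-one homogeneous elements of $R_{0}$. The preliminary observation that $f_{0}\neq 0$ forces $f\in\{\pm 1\}$ is also correct and is a genuinely useful structural fact.

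The second half, however, has a real gap exactly at the step you flag as ``the main obstacle,'' and your proposed resolution does not close it. An ideal $J$ of a convexoid ring is closed under multiplication by ring elements and under $\boxplus$ applied to elements \emph{of $J$}; it is not closed under the ambient addition $+$ of $\ZZ[\gamma]$. From $\gamma g\in J$ and $\gamma h\in J$ the $\boxplus$-closure gives you $\boxplus(\gamma g,\gamma h)=\gamma(\gamma g+\gamma h)=\gamma\cdot\boxplus(g,h)\in J$, which is $\gamma$ times the element you need, not the element itself; writing $\boxplus(g,h)=\gamma g+\gamma h$ and declaring membership in $J$ because ``both summands lie in $J$'' silently uses closure under $+$. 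The problem is not cosmetic: take $g=\gamma\in J$ and $h=1$. Then $\boxplus(\gamma,1)=\gamma+\gamma^{2}=\gamma(1+\gamma)$ lies in $(R_{0})_{+}$, but every element of the closure of $\{0,\pm\gamma,\pm 2\gamma\}$ under $R_{0}$-multiplication, negation and $\boxplus$ of its own elements has zero constant term, so no decomposition $\boxplus(x,y)=\gamma(x+y)$ with $x,y\in J$ can produce $\gamma(1+\gamma)$ (that would need $x+y=1+\gamma$), and the only divisors of $\gamma(1+\gamma)$ in $\ZZ[\gamma]$ with zero constant term are $\pm\gamma$ and $\pm\gamma(1+\gamma)$, with complementary factor $\pm(1+\gamma)\notin R_{0}$ by your own preliminary observation. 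So $\gamma+\gamma^{2}$ is not reached by your derivation, and under the literal ``smallest sub-convexoid closed under multiplication'' notion of generation the inclusion $(R_{0})_{+}\subseteq\langle(R_{0})_{1}\rangle$ fails for it. To salvage the second assertion you must either restrict attention to homogeneous elements (where your argument essentially works: $m\gamma^{d}$ with $|m|\leq 2^{d}$ is reached by induction on $d$ via $m\gamma^{d}=\boxplus(k\gamma^{d-1},(m-k)\gamma^{d-1})$), or interpret ``generated'' up to radical, i.e.\ as an equality in $\Omega(R_{0})$ --- which is all that the covering $\Proj R_{0}=D_{+}(\gamma)\cup D_{+}(2\gamma)$ actually requires --- and then prove that weaker statement directly.
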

Again, this is an easy exercise, and the proof is left to the reader.

By the above lemma, $\Proj R_{0}$ is covered by
two affines 
\[
\begin{split}
D_{+}(\gamma) \simeq \Spec A_{(\gamma)} \simeq \Spec \ZZ, \\
D_{+}(2\gamma) \simeq \Spec A_{(2\gamma)} \simeq \Spec \mathbf{D}\ZZ[1/2].
\end{split}
\]
It is obvious to see that the patching of $D_{+}(\gamma)$
and $D_{+}(2\gamma)$ coincides with that of $Y_{0}$ introduced
in the previous section,
which shows that $Y_{0}$ is isomorphic to $\Proj R_{0}$.

\begin{Rmk}
As we have mentioned in Remark \ref{rmk:global:sec},
the global section $\Gamma(\Proj R_{0}, \scr{O})$ of $\Proj R_{0}$
is $\FF_{1^{2}}$, which is only a monoid.
We might want to formulate a morphism 
$\pi:\Proj A \to \Spec \FF_{1^{2}}$ in some sense
and say that $\pi$ is proper,
but there lies a technical difficulty:
since $\infty$ is the unique closed point in $\Spec \mathbf{D}\ZZ[1/2]$,
there are two homomorphisms 
$\Spec \ZZ_{(p)} \to \Spec\mathbf{D}\ZZ[1/2]$ for odd prime $p$,
sending the closed point to either $\infty$ or $(p) \in \ZZ[1/2]$.
This shows that we cannot say that $\Proj A$ is proper.
\end{Rmk}

\section{Weak convexoid schemes}

The appropriate compactification
$\overline{\Spec \ZZ}$ 
is realized only as a more general object,
the construction of which
is given by an analogue of that of $\scr{A}$-schemes \cite{Takagi2}.


For a coherent space $X$,
we denote by $\Omega(X)$ the distributive lattice
of quasi-compact open subsets of $X$.
There is a natural $\cat{DLat}$-valued sheaf $\tau_{X}$ on $X$
defined by $U \mapsto \Omega(U)$ for each quasi-compact open $U$.

\begin{Def}
\begin{enumerate}
\item A \textit{weak convexoid scheme}
is a quadruple $(X,\scr{O}_{X},\mathcal{U},\beta_{X})$
where $X$ is a coherent space,
$\scr{O}_{X}$ is a sheaf of commutative multiplicative monoids
on $X$, and $\mathcal{U}=\{(U_{i},\boxplus_{i}^{d_{i}})\}_{i}$
is a set of pairs of a quasi-compact open subset $U_{i}$ of $X$
and a $d_{i}$-convexoid ring structure $\boxplus_{i}^{d_{i}}$
 on $\scr{O}_{X}(U_{i})$,
such that
\begin{enumerate}
\item $\mathcal{U}$ is a covering of $X$,
namely $\cup_{U \in \mathcal{U}}U=X$, and
\item $\mathcal{U}$ is a lower set,
namely if $V \subset U$ and $U \in \mathcal{U}$,
then $V \in \mathcal{U}$ and $\Gamma(U) \to \Gamma(V)$
is a weak homomorphism.
\end{enumerate}
We can define a $\cat{DLat}$-valued sheaf $\Omega\scr{O}_{X}$
as follows:
recall that the correspondence $R \mapsto \Omega(R)$
(see the end of \S 0 for the definition)
gives a functor $\Omega:\cat{CxdRing} \to \cat{DLat}$,
where $\cat{DLat}$ is the category of distributive lattices.
Note that $\Omega(R)$ does not change
when we replace the $\boxplus$ by 
another equivalent multi-convexoid structure by
Proposition \ref{prop:invar:spec:equiv}
and Proposition \ref{prop:inv:wt:spec}.

For an open subset $V$ of $X$,
let $\mathcal{U}_{V}$ be the subset of $\mathcal{U}$
consisting of all quasi-compact open subsets contained in $V$.
Then $\Omega\scr{O}_{X}(V)$ is defined as the equalizer of
\[
\prod_{U \in \mathcal{U}}\Omega\Gamma (U,\scr{O}_{X})
\rightrightarrows \prod_{U_{1},U_{2} \in \mathcal{U}}
\Omega\Gamma(U_{1} \cap U_{2}).
\]
$\beta_{X}$ is a morphism 
$\Omega\scr{O}_{X} \to \tau_{X}$ 
of $\cat{DLat}$-valued sheaves on $X$, which satisfies the following:
for any inclusion $V \subset U$ of open subsets of $X$,
the restriction map $\scr{O}_{X}(U) \to \scr{O}_{X}(V)$
factors through $T^{-1}\scr{O}_{X}(U)$,
where $T$ is the multiplicative system of $\scr{O}_{X}(U)$
defined by
\[
T=\{f \mid \beta_{X}(U)(f) \geq V\}.
\]
Here, $f$ is identified with the principal ideal 
$(f) \in \Omega\scr{O}_{X}(U)$ generated by $f$.
We refer to $\beta_{X}$ as the \textit{support morphism of $X$}.
\item For a weak convexoid scheme $X$
and a point $x \in X$,
the stalk $\scr{O}_{X,x}$ need not have
a canonical choice of a $\boxplus$-structure.
However, we can define the notion of a 
finitely generated radical ideal of $\scr{O}_{X,x}$:
it is independent of the choice of the $\boxplus$-structure.
Also, $\scr{O}_{X,x}[\gamma_{x}^{-1}]$
has a natural structure of a commutative ring,
where $\gamma_{x}$ is the fundamental constant
of any $\scr{O}_{X}(U)$, $x \in U$.
(This constant depends on the choice of $U$,
but the localization $\scr{O}_{X,x}[\gamma_{x}^{-1}]$ is
independent. Hence, we will call
$\gamma_{x}$ \textit{the fundamental constant of $\scr{O}_{X,x}$}.)
Then $\scr{O}_{X,x}$ becomes \textit{local},
in the sense that the complement of the set of units 
forms the maximal ideal.

\item A morphism $f:X \to Y$ of weak convexoid schemes
is a morphism of monoid-valued spaces such that
for any $x \in X$,
\begin{enumerate}
\item
$f_{x}:\scr{O}_{Y,f(x)} \to \scr{O}_{X,x}$
induces a ring homomorphism
$\scr{O}_{Y,f(x)}[\gamma_{f(x)}^{-1}] \to \scr{O}_{X,x}[\gamma_{x}^{-1}]$,
and
\item $f_{x}$ is \textit{local}:
$f_{x}^{-1}(\mathfrak{m}_{x})=\mathfrak{m}_{f(x)}$
where $\mathfrak{m}_{x}$ (resp. $\mathfrak{m}_{f(x)}$)
is the unique maximal ideal of $\scr{O}_{X,x}$
(resp. $\scr{O}_{Y,f(x)}$).
\end{enumerate}
\end{enumerate}
\end{Def}

We will first recall what a Zariski-Riemann space should be.

\begin{Def}
\begin{enumerate}
\item 
A morphism $f:X \to S$ of convexoid schemes
is \textit{proper}, if it satisfies the following condition:
for any commutative square
\[
\xymatrix{
\Spec K \ar[r] \ar[d] & X \ar[d]^{f} \\
\Spec R \ar[r] \ar@{.>}[ur] & S
}
\]
where $R$ is a valuation convexoid ring and $K$
its fraction field, there exist a unique morphism
$\Spec R \to X$ making the whole diagram commutative.

\item Let $X$ be a (weak) convexoid scheme over 
a base (weak) convexoid scheme $S$.
The \textit{Zariski-Riemann space of $X$ over $S$}
 is a $S$-morphism $X \to \ZR(X,S)$
where $\ZR(X,S)$ is a proper (weak) convexoid scheme over $S$
and is universal:
namely, any $S$-morphism $f:X \to Y$ with
$Y \to S$ proper factors uniquely through $\ZR(X,S)$:
\[
\xymatrix{
X \ar[r]^{f} \ar[d] & Y \\
\ZR(X,S) \ar@{.>}[ur]
}
\]
\end{enumerate}
\end{Def}
If the Zariski-Riemann space exists,
then it is unique up to isomorphism.
However, this may not be constructed
within the category of (weak) convexoid schemes.

Let $\mathcal{O}_{K}$ be the integer ring
of an algebraic field $K$.
Note that the Zariski-Riemann space $\ZR(\Spec K,\Spec\ZZ)$
is isomorphic to $\Spec \mathcal{O}_{K}$.
Since $\Spec \ZZ$ is a closed subscheme of $U_{1}$,
we see that $\ZR(\Spec K,U_{1})$ coincides
with $\Spec \mathcal{O}_{K}$.

We will prove the following:
\begin{Thm}
The Zariski-Riemann space 
\[
X=\ZR(\Spec K,S_{0})=\ZR(\Spec \mathcal{O}_{K},S_{0})
=\ZR(\Spec \mathcal{O}_{K},\Proj R_{0})
\]
exists as a weak convexoid scheme.
Its underlying space
is set-theoretically isomorphic to 
$\Spec \ZZ \cup \{\infty_{\sigma}\}_{\sigma}$,
where $\infty_{\sigma}$ is the absolute valuation
corresponding to an immersion $\sigma:K \to \CC$
of fields.
The stalk $\scr{O}_{X,\infty}$ is isomorphic
to $\mathbf{D}_{\sigma}K=\{x \in K \mid |\sigma(x)|_{\infty} \leq 1\}$.
\end{Thm}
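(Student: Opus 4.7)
The strategy is to construct $X$ by an explicit gluing and then verify the universal property via the valuative criterion, the key algebraic input being the generalized Ostrowski theorem \ref{thm:ostrowski:general}. First, I enumerate the points of the candidate space: any valuation convexoid ring $R \subset K$ with fraction field $K$ that dominates a point of $S_{0}$ and contains $(2\gamma)^{-1}$ (equivalently, has $1 \boxplus 1$ invertible) must, by Theorem \ref{thm:ostrowski:general}, be either some $\mathcal{O}_{K,\mathfrak{p}}$ of residue characteristic $\neq 2$ or some $\mathbf{D}_{\sigma}K$ for an immersion $\sigma : K \to \CC$. The primes above $2$ are recovered from the other chart $U_{1} \simeq \Spec \ZZ$, where domination forces $R=\mathcal{O}_{K,\mathfrak{p}}$ by the classical theory. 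This gives the set $\Spec \mathcal{O}_{K} \cup \{\infty_{\sigma}\}_{\sigma}$.

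Second, I build $X$ by gluing two quasi-compact opens: $V_{1}$, the preimage of $U_{1}$, taken to be $\Spec \mathcal{O}_{K}$ with its usual ring structure; and $V_{2}$, the preimage of $U_{2}$, consisting of the primes of $\mathcal{O}_{K}[1/2]$ together with the $\infty_{\sigma}$, whose affine neighborhoods around each $\infty_{\sigma}$ are spectra of valuation convexoid rings with stalk $\mathbf{D}_{\sigma}K$ under $a \boxplus b = \pm(a+b)/2$. A straightforward extension of Lemma \ref{lem:spec:disk:zz1/2} identifies the underlying space of $V_{2}$. I glue $V_{1}$ and $V_{2}$ along $V_{1} \cap V_{2} = \Spec \mathcal{O}_{K}[1/2]$ using the twist $\phi^{*}$ inherited from $S_{0}$. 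The covering $\mathcal{U}$ consists of all quasi-compact opens contained in $V_{1}$ or $V_{2}$ with their inherited $\boxplus$-structures; invariance of $\Omega$ under equivalent multi-convexoid structures (Propositions \ref{prop:invar:spec:equiv} and \ref{prop:inv:wt:spec}) makes $\Omega\scr{O}_{X}$ a well-defined sheaf and furnishes the support morphism $\beta_{X}$. The stalk identifications then hold essentially by construction.

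Finally I check the universal property. Given $\Spec K \to Y \to S_{0}$ with $Y \to S_{0}$ proper, each point $y$ in the closure of the image of $\Spec K$ yields a valuation convexoid ring dominating $\scr{O}_{Y,y}$ with fraction field $K$; by the classification above, such a ring is uniquely realized as a stalk of $X$, producing a unique morphism $Y \to X$ of weak convexoid schemes over $S_{0}$. Locality and the induced ring homomorphism on $\gamma^{-1}$-localizations follow from the construction. The main obstacle I foresee is the coherent bookkeeping of $\boxplus$-structures across the twist: at each odd prime one must reconcile the additive $\boxplus=+$ inherited from $V_{1}$ with the averaged $\boxplus = \pm(\cdot+\cdot)/2$ inherited from $V_{2}$, show these are weakly equivalent in the sense of Definition \ref{def:diff:wt:equiv} after inverting the relevant fundamental constant, and confirm that this equivalence is precisely what the twist $\phi^{*}$ demands. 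Beyond that technical verification, the uniqueness clause in Ostrowski's theorem is what upgrades $X$ from a merely proper object to the universal one.
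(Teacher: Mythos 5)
Your overall strategy matches the paper's: classify the points via the generalized Ostrowski theorem, build $X$ by patching a chart over $U_{1}$ (which is just $\Spec\mathcal{O}_{K}$) with a chart $X_{2}$ over $U_{2}$ containing the infinite places, and then verify the universal property. However, there are two genuine problems. The first and most serious is that your verification of the universal property runs in the wrong direction. The Zariski--Riemann space must satisfy: every $S_{0}$-morphism $\Spec\mathcal{O}_{K}\to Y$ with $Y\to S_{0}$ proper factors as $\Spec\mathcal{O}_{K}\to X\to Y$, so what you must produce is a morphism $X\to Y$. You instead start from a point $y\in Y$, extract a valuation convexoid ring dominating $\scr{O}_{Y,y}$, and claim a ``unique morphism $Y\to X$.'' Besides being the wrong arrow, this assignment is not well defined: a given local monoid $\scr{O}_{Y,y}$ may be dominated by many valuation convexoid rings of $K$ (or require an existence argument to be dominated by any). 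The correct mechanism, which is what the paper uses, is to apply the valuative criterion to $Y\to U_{2}$ \emph{at each point $R$ of $X_{2}$}: the square with $\Spec K\to Y$ on top and $\Spec R\to U_{2}$ on the bottom has a unique diagonal $\Spec R\to Y$ by properness of $Y$, and sending $R$ to the image of its closed point assembles these diagonals into the set-theoretic map $\tilde f\colon |X_{2}|\to |Y|$; continuity and the sheaf-level maps are then checked using the fact that $\infty_{\sigma}\in\tilde f^{-1}(U)$ forces $\scr{O}_{Y}(U)\to K$ to factor through $\mathbf{D}_{\sigma}K$.

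The second problem is your description of the chart $V_{2}$ near $\infty_{\sigma}$ as having \emph{affine} neighborhoods given by spectra of valuation convexoid rings. This cannot work, and the paper's Remark \ref{rmk:global:sec} is precisely the point: in the affine model $\Spec\mathbf{D}\ZZ[1/2]$ the infinite place is the unique closed point, the topology is wrong, and the stalk there is $\mathbf{D}\ZZ[1/2]$ rather than $\mathbf{D}\QQ$; likewise $\Spec\mathbf{D}_{\sigma}K$ is a two-point space, not an open neighborhood of $\infty_{\sigma}$ containing cofinitely many finite places. This failure of local affineness is exactly why the theorem is stated for \emph{weak} convexoid schemes: one must take $|X_{2}|$ to be the set of valuation convexoid rings of $K$ with $1\boxplus 1$ invertible, topologized by the basis $U(\mathcal{S})=\{R\mid\mathcal{S}\subset R\}$, and define the structure sheaf directly by $U\mapsto\bigcap_{R\in U}R$, together with the support morphism $\beta_{X}$. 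With that definition the stalk at $\infty_{\sigma}$ is the filtered union of these intersections, which is $\mathbf{D}_{\sigma}K$ as claimed; in your version it would not be. Your closing remarks about reconciling $\boxplus=+$ with $\boxplus=\pm(a+b)/2$ across the twist are on target, but they do not repair either of the two gaps above.
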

This is what we wanted to construct.
\begin{Rmk}
Let $\scr{A}$ be the algebraic type of commutative rings.
In the category of (profinite) $\scr{A}$-schemes,
the existence of the Zariski-Riemann space is assured
\cite{Takagi2}.
However, we do not have a general
theory of Zariski-Riemann spaces for convexoid schemes.
Therefore, we will content ourselves
by constructing the Zariski-Riemann space $X$
explicitly for this specific case.
\end{Rmk}

\begin{proof}
First, we will construct $X$.
We only have to construct its restriction $X_{2}$
to the fiber on $U_{2}$,
since Zariski-Riemann spaces are local with respect to the base.
Let $|X_{2}|$ be the set of all valuation convexoid rings
of $K$ such that $2\gamma=1 \boxplus 1$ is invertible.
We endow a topology on $|X_{2}|$ which is generated
by the open basis of the form
\[
U(\mathcal{S})=\{R \in |X_{2}| \mid \mathcal{S} \subset R \},
\]
where $\mathcal{S}$ is any finite subset of $K$.

The theorem of Ostrowski \ref{thm:ostrowski:general}
tells that any valuation convexoid ring in $|X_{2}|$
is either 
\begin{enumerate}[(a)]
\item the trivial one, 
\item the (non-complete) discrete valuation ring
$\mathcal{O}_{K,(\mathfrak{p})}$ such that 
the characteristic of the residue field $\kappa(\mathfrak{p})$
is not $2$,
\item the disk $\mathbf{D}_{\sigma}K$ associated to 
an immersion $\sigma:K \to \CC$ of fields.
\end{enumerate}
The above topology makes $|X_{2}|$ into a coherent space:
a non-empty open subset $U$ of $|X_{2}|$
is a subset whose complement is a finite set
not containing the trivial valuation ring.

The structure sheaf $\scr{O}_{X}|_{X_{2}}$
is defined by 
\[
U \mapsto \{a \in K \mid a \in R \quad (\forall R \in U)\}.
\]
The support morphism 
$\beta_{X}:\Omega\scr{O}_{X}|_{X_{2}} \to \tau_{X}|_{X_{2}}$
is defined by
\[
(f_{1},\cdots ,f_{n}) \mapsto \{ R \in U \mid
f_{i} \in \mathfrak{m}_{R} \ (\forall i) \}^{c}.
\]
It is straightforward to see that $\beta_{X}$
is well defined and that $X_{2}=(|X_{2}|,\scr{O}_{X}|_{X_{2}},
\beta_{X})$ becomes a weak convexoid scheme.
Let us denote by $\infty_{\sigma}$ the point of $X_{2}$
corresponding to $\mathbf{D}_{\sigma}K$.
Then we see that $X_{2} \setminus \{\infty_{\sigma}\}_{\sigma}$
is isomorphic to $\Spec \mathcal{O}_{K}[1/2]$.
Therefore, we obtain $X$ by the pushout
\[
\xymatrix{
\Spec \mathcal{O}_{K}[1/2] \ar[r] \ar[d] & X_{2} \ar[d] \\
\Spec \mathcal{O}_{K} \ar[r] & X.
}
\]
We have a convexoid ring homomorphism
\[
R_{0}[(2\gamma)^{-1}] 
\to \Gamma(X_{2}, \scr{O}_{X})=
\cap_{\sigma:K \to \CC}\mathbf{D}_{\sigma}\mathcal{O}_{K}[1/2]
\]
by $\gamma \mapsto 1/2$.
This induces a morphism $X_{2} \to U_{2}$,
and patches up with $\Spec \mathcal{O}_{K} \to U_{1}$
to give the morphism $\nu:X \to S_{0}$.
We see that $\nu$ is proper,
since $\nu|_{\Spec \mathcal{O}_{K}}$ is a closed immersion,
and $\nu|_{X_{2}}$ is obviously proper
from the construction.

Finally, we will see that $X$ has the universal
property. It suffices to show that $X_{2} \to U_{2}$
satisfies the property.
Let $f:\Spec\mathcal{O}_{K}[1/2] \to Y$ be a $U_{2}$-morphism,
where $Y$ is a weak convexoid scheme,
proper over $U_{2}$.
For each valuation convexoid ring $R \in |X_{2}|$,
we have the following commutative diagram
\[
\xymatrix{
\Spec K \ar[r] \ar[d] & Y \ar[d] \\
\Spec R \ar[r] \ar@{.>}[ru]& U_{2},
}
\]
and the properness of $Y$ tells that there
is a unique arrow $\Spec R \to Y$ making
the whole diagram commutative.
This gives a unique set-theoretic map $\tilde{f}:|X_{2}| \to |Y|$.
This becomes continuous, since
it is continuous on $X_{2} \setminus \{\infty_{\sigma}\}_{\sigma} \to Y$.
It remains to construct the morphism
between the structure sheaves.
We only have to consider 
$\scr{O}_{Y}(U) \to \scr{O}_{X_{2}}(\tilde{f}^{-1}U)$,
when $\tilde{f}^{-1}U$ contains some 
infinite places $\infty_{\sigma}$.
Since $\infty_{\sigma} \in \tilde{f}^{-1}U$ implies
that the map $\scr{O}_{Y}(U) \to K$ factors through 
$\mathbf{D}_{\sigma}K$,
this weak homomorphism also factors
through $\scr{O}_{X_{2}}(\tilde{f}^{-1}U)$.
Therefore, we have constructed the morphism
$\tilde{f}:X_{2} \to Y$ of monoid-valued spaces,
and it is straightforward to check that
this is indeed a morphism of weak convexoid schemes.
The uniqueness of $\tilde{f}$ is obvious from the construction.
\end{proof}

\section{Appendix: Embedding of $\Proj R_{0}$}

As we have seen, the initial object
$R_{0}$ in the category of convexoid rings
has a natural grading structure, and the convexoid scheme
$\Proj R_{0}$ is the `fake closure' of $\Spec \ZZ$.

Once we have a projective scheme,
algebraic geometers would ask what the projective embedding
associated to a very ample line bundle might be.
We will seek for an analogy of the projective embedding for $\Proj R_{0}$.
This can be realized, and the result can be summarized as follows:
\begin{Thm}
Let $R_{0}$ be the initial object in the
category of convexoid rings and $d$ a positive integer.
\begin{enumerate}
\item Each line bundle $\scr{O}(d)$
of $\Proj R_{0}$ gives a morphism 
$\Proj R_{0} \to \PP^{2^{d}-1}_{\FF_{1^{2}}}$
of monoid-valued spaces.
\item In particular, we have an immersion
$\Proj R_{0} \to \PP$ into a proprojective space 
$\PP$ over $\FF_{1^{2}}$.
\end{enumerate}
\end{Thm}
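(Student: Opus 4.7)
The plan is to emulate the classical construction producing a morphism to projective space from a base-point-free linear system, and then to pass to a projective limit over $d$ to obtain the immersion into $\PP$.

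First I would determine the graded piece $(R_{0})_{d}$. Combining Corollary \ref{cor:initial:cxd:ring} with Lemma \ref{lem:surj:r0:dzz1/2} and the closure conditions defining $R_{0}$, a direct induction on the length of a defining expression (tracking the $L^{\infty}$-norm of the coefficient) gives
\[
(R_{0})_{d}=\{m\gamma^{d}:m\in\ZZ,\ |m|\leq 2^{d}\}.
\]
Since $(R_{0})_{0}=\FF_{1^{2}}=\{0,\pm 1\}$, the quotient $((R_{0})_{d}\setminus\{0\})/\FF_{1^{2}}$ has exactly $2^{d}$ elements, represented by $s_{k}=k\gamma^{d}$ for $k=1,\ldots,2^{d}$. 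These $s_{k}$ are my candidate homogeneous coordinates for the map to $\PP^{2^{d}-1}_{\FF_{1^{2}}}$.

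Next I would verify base-point-freeness on the affine cover $\Proj R_{0}=D_{+}(\gamma)\cup D_{+}(2\gamma)$ produced in \S 5. The section $s_{1}=\gamma^{d}$ is a unit on $D_{+}(\gamma)$ while $s_{2^{d}}=(2\gamma)^{d}$ is a unit on $D_{+}(2\gamma)$, so the $s_{k}$ have no common zero. The morphism $\varphi_{d}\colon\Proj R_{0}\to\PP^{2^{d}-1}_{\FF_{1^{2}}}$ is then defined chartwise: on the preimage of the $k$-th standard chart it corresponds to the monoid homomorphism sending the affine coordinates $t_{j}$ ($j\neq k$) to the quotients $s_{j}/s_{k}\in(R_{0})_{(s_{k})}$. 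Compatibility on overlaps is the same naturality argument as in the classical $\Proj$-construction, now applied to the multi-convexoid ring $R_{0}$ via the machinery of \S 5.

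For part (2), the proprojective space $\PP$ is to be interpreted as the projective limit of the $\PP^{2^{d}-1}_{\FF_{1^{2}}}$ along the transition maps induced by the multiplication $(R_{0})_{d}\otimes(R_{0})_{e}\to(R_{0})_{d+e}$ (a convexoid analogue of the Veronese embedding). The $\varphi_{d}$ are compatible with these transitions by construction, hence assemble to a morphism $\varphi\colon\Proj R_{0}\to\PP$. To upgrade $\varphi$ to an immersion, one must verify both that it is a homeomorphism onto its image and that the induced map on stalks is surjective up to weak equivalence.

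The main obstacle is precisely this final surjectivity. Classically one invokes that $A=\bigoplus_{d\geq 0}A_{d}$ is generated in degree one, but here the graded pieces $(R_{0})_{d}$ are finite and the $\boxplus^{d}$-structure introduces extra relations that must be matched against the structure on the target. My plan is to exploit the explicit presentation of $R_{0}$ from Corollary \ref{cor:initial:cxd:ring}: every element is obtained from $0,1$ by multiplication, negation, and $(f,g)\mapsto\gamma(f+g)$, so for any $x\in\Proj R_{0}$ the stalk $\scr{O}_{\Proj R_{0},x}$ is recovered in the limit from the affine pieces $(R_{0})_{(s_{k})}$ as $d\to\infty$, and the transition maps on the target encode all the relevant quotients $s_{j}/s_{k}$. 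Since this appendix is flagged as heuristic and to be made rigorous in forthcoming work, the above is intended as a blueprint rather than a complete proof.
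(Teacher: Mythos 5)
Your proposal follows essentially the same route as the paper: you identify $(R_{0})_{d}=\{m\gamma^{d}:|m|\leq 2^{d}\}$ as the linear system, fix the basis $\gamma^{d},2\gamma^{d},\dots,2^{d}\gamma^{d}$, define $f_{d}$ chartwise by $x_{l}\mapsto l\gamma^{d}/n\gamma^{d}$ into $(R_{0})_{(n\gamma^{d})}$, and pass to an infinite product/limit of the $\PP^{2^{d}-1}_{\FF_{1^{2}}}$ to obtain the immersion, exactly as in the appendix. The only (cosmetic) difference is that you organize $\PP$ as a projective limit along Veronese-type transition maps rather than as the plain fibre product $\prod_{d}\PP^{2^{d}-1}_{\FF_{1^{2}}}$, and the stalk-surjectivity step you flag as incomplete is likewise left unproved in the paper, which explicitly declares this section heuristic.
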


Before we proceed,
we will review what the projective space $\PP_{\FF_{1}}^{n}$
(or, $\PP_{\FF_{1^{2}}}^{n}$; the construction is essentially
the same) is.
As we have mentioned in the introduction,
$\FF_{1}$-algebras are regarded as a monoid:
for example, a polynomial ring 
$\FF_{1}[x_{1},\cdots,x_{n}]$ over $\FF_{1}$
is a free commutative monoid $\NNN^{n} \cup \{0\}$ 
with an absorbing element $0$, generated by 
$x_{1},\cdots,x_{n}$.
In this sense,
we know that schemes over $\FF_{1}$ 
(namely, `monoid schemes')
can be constructed, and there is an adjunction
\[
\Spec:\cat{CMnd$_{0}$} \rightleftarrows \cat{Sch/$\FF_{1}$}^{\op}:
\Gamma.
\]
(cf. \cite{Takagi1}).
In the sequel, we only consider
coherent schemes and quasi-compact morphisms.
We have a left adjoint of the underlying functor
$U:\cat{CRing} \to \cat{CMnd$_{0}$}$,
which we denote by $\ZZ[\cdot]/0$:
for $M \in \cat{CMnd$_{0}$}$,
$\ZZ[M]/0$ is the monoid ring $\ZZ[M]$
divided by the ideal generated by the absorbing element $0_{M}$ of $M$.
The functor $\ZZ[\cdot]/0$ patches up to give
a functor 
\[
\ZZ \times_{\FF_{1}} (-):\cat{Sch/$\FF_{1}$} \to \cat{Sch}.
\]
The unit morphism $M \to \ZZ[M]/0$
induces a morphism $\Spec \ZZ[M]/0 \to \Spec M$,
and this extends to give a natural morphism
\[
\pi_{X}:\ZZ \times_{\FF_{1}} X \to X
\]
of monoid-valued spaces.

For example, a fan (in the sense of toric geometry
\cite{Oda}) $\Delta$
together with an absorbing element 
gives a scheme $\Spec \Delta$ over $\FF_{1}$,
and $X=\ZZ \times_{\FF_{1}}\Spec \Delta$ is just the toric scheme 
over $\ZZ$ associated to the fan $\Delta$.
The $\FF_{1}$-scheme $\Spec \Delta$
has its underlying space as a subset of $X$
consisting of the generic points
of the images of $\mathbb{T}$-invariant sections
$\Spec \ZZ \to X$,
where $\mathbb{T}$ is the maximal torus of $X$.
In particular, for each fiber $F$ of $X \to \Spec \ZZ$,
the points of $\Spec \Delta$ correspond
to $\mathbb{T}$-invariant points of $F$.
The natural morphism $\pi_{X}:X \to \Spec \Delta$
sends each point $x$ of $X$ to the generic point
of the closure of the $\mathbb{T}$-orbit of $\overline{\{x\}}$.

In particular, the projective space $\PP^{n}_{\FF_{1}}$
corresponds to the fan $\Delta$ representing the
projective space $\PP^{n}$,
and its points correspond to prime ideals generated by monomials
over homogeneous coordinates;
therefore, the configuration of points
can be described as a $n$-simplex;
each $l$-dimensional face of the $n$-simplex corresponds
to a $l$-dimensional point of $\PP^{n}_{\FF_{1}}$
(Figure \ref{figure:pp2:ff1}).

\begin{figure}
\begin{center}
\unitlength 0.1in
\begin{picture}( 21.1000, 13.3300)( -0.3000,-14.9000)
%
{\color[named]{Black}{%
\special{pn 8}%
\special{pa 800 1400}%
\special{pa 2000 1400}%
\special{fp}%
\special{pa 2000 1400}%
\special{pa 1400 400}%
\special{fp}%
\special{pa 1400 400}%
\special{pa 800 1400}%
\special{fp}%
}}%
%
{\color[named]{White}{%
\special{pn 0}%
\special{sh 1.000}%
\special{ia 1400 406 66 66  0.0000000  6.2831853}%
}}%
{\color[named]{Black}{%
\special{pn 8}%
\special{pn 8}%
\special{ar 1400 406 66 66  0.0000000  6.2831853}%
}}%
%
{\color[named]{White}{%
\special{pn 0}%
\special{sh 1.000}%
\special{ia 806 1400 66 66  0.0000000  6.2831853}%
}}%
{\color[named]{Black}{%
\special{pn 8}%
\special{pn 8}%
\special{ar 806 1400 66 66  0.0000000  6.2831853}%
}}%
%
{\color[named]{White}{%
\special{pn 0}%
\special{sh 1.000}%
\special{ia 2000 1400 60 60  0.0000000  6.2831853}%
}}%
{\color[named]{Black}{%
\special{pn 8}%
\special{pn 8}%
\special{ar 2000 1400 60 60  0.0000000  6.2831853}%
}}%
\put(14.0500,-9.9500){\makebox(0,0){$(0)$}}%
\put(17.2500,-8.7500){\makebox(0,0)[lb]{$(x_{0})$}}%
\put(10.8000,-8.7000){\makebox(0,0)[rb]{$(x_{2})$}}%
\put(14.0000,-15.2000){\makebox(0,0){$(x_{1})$}}%
\put(13.9500,-2.3000){\makebox(0,0){$(x_{0},x_{2})$}}%
\put(20.8000,-14.8000){\makebox(0,0)[lt]{$(x_{0},x_{1})$}}%
\put(8.4500,-14.9000){\makebox(0,0)[rt]{$(x_{1},x_{2})$}}%
\end{picture}%
\end{center}
\caption[]{Configuration of the points of $\PP^{2}_{\FF_{1}}$.}
\label{figure:pp2:ff1}
\end{figure}

If we replace $\FF_{1}$ by $\FF_{1^{2}}$,
then the underlying space does not change,
but only the structure sheaf becomes the
sheaf of $\FF_{1^{2}}$-algebras, namely
each section admits its minus.

Now, we go back to $\Proj R_{0}$.
We will imitate the construction of projective
morphisms in algebraic geometry,
with an exception that we forget the additive structures.

The set $L_{d}=(R_{0})_{d}$ of 
homogeneous elements of $R_{0} \subset \ZZ[\gamma]$
of degree $d$ consists of
\[
0,\pm \gamma^{d}, \pm 2\gamma^{d}, \pm 3\gamma^{d}
\cdots,\pm 2^{d}\gamma^{d}.
\]
Unlike the case of rings, this set does not
have an additive structure, but only the 
$\FF_{1^{2}}=(R_{0})_{0}$-action;
namely, $L_{d}$ is an $\FF_{1^{2}}$-module.
However, we can still regard it as a
linear system, and consider the line bundle $\scr{O}(d)$
and even a rational map, associated to $L_{d}$ as follows.

The line bundle $\scr{O}(d)$
is a $\scr{O}_{\Proj R_{0}}$-submodule
of the locally constant sheaf $\QQ^{*}$, generated by
\[
m\gamma^{d}/n\gamma^{d}=m/n \quad (1\leq m,n \leq 2^{d}).
\]
Indeed, this canonically becomes a line bundle, and
$L_{d}$ can be regarded as the set of global
sections of $\scr{O}(d)$.
In other words, $\scr{O}(d)$ is globally generated.

The linear system $L_{d}$ is free as an $\FF_{1^{2}}$-module,
hence we will fix a basis $\gamma^{d},
2\gamma^{d},\cdots, 2^{d}\gamma^{d}$
to construct the morphism associated to $L_{d}$ in the sequel.

Let $\FF_{1^{2}}[x_{1},\cdots,x_{2^{d}}]$
be a polynomial ring (in fact, a monoid)
with coefficients in $\FF_{1^{2}}$,
with the canonical grading.
For each $1 \leq n \leq 2^{d}$,
we have a morphism of monoids
\[
\FF_{1^{2}}[x_{1},\cdots,x_{2^{d}}]
\to (R_{0})_{(n\gamma^{d})}
\quad
(x_{l} \mapsto l\gamma^{d}/n\gamma^{d}=l/n),
\]
which extends to $\FF_{1^{2}}[x_{1}/x_{n},\cdots,x_{2^{d}}/x_{n}]
\to (R_{0})_{(n\gamma^{d})}$.
These patch up to give a morphism
$f_{d}:\Proj R_{0} \to \PP^{2^{d}-1}_{\FF_{1^{2}}}$
of monoid-valued spaces.
For a finite place $p \in \Spec \ZZ \subset \Proj R_{0}$,
$f_{d}$ sends $p$ to the
point corresponding to the prime
$(x_{p},x_{2p},\cdots,x_{[2^{d}/p]p})$,
where $\{x_{i}\}_{i}$ are homogeneous coordinates of $\PP^{2^{d}-1}$.
The infinity place $\infty$ goes to the point
corresponding to $(x_{1},\cdots,x_{2^{d}-1})$,
which is one of the closed points of $\PP^{2^{d}-1}$.

Note that $f_{d}$ never becomes an immersion,
since a finite place $p$ goes to the generic point
of $\PP^{2^{d}-1}$ when $p$ is larger than $2^{d}$.
This is just one translation of the fact that the multiplicative monoid
$\ZZ \setminus \{0\}$ is not finitely generated.

However, we can consider the infinite product
\[
\PP=\prod_{d}\PP^{2^{d}-1}=
\PP^{2^{1}-1}_{\FF_{1^{2}}} \times_{\FF_{1^{2}}} 
\PP^{2^{2}-1}_{\FF_{1^{2}}} \times_{\FF_{1^{2}}}
\cdots
\]
and a morphism $f:\Proj R_{0} \to \PP$
in the category of \textit{weak} schemes over $\FF_{1}$
(cf. \cite{Takagi1}).
Then, $f$ becomes an immersion.

\textbf{Acknowledgements}:
\index{ach@xach}
I was partially supported by the Grant-in-Aid for Young
Scientists (B) \# 23740017.

\textsc{S. Takagi: Department of Mathematics, Faculty of Science,
Kyoto University, Kyoto, 606-8502, Japan}

\textit{E-mail address}: \texttt{takagi@math.kyoto-u.ac.jp}

\end{document}